\documentclass{article}
\usepackage{epsfig}
\usepackage{amsmath, amssymb, amsthm}	
\usepackage{color}		

\usepackage{xcolor}

\setcounter{secnumdepth}{5}
\setcounter{tocdepth}{7}

\newcommand{\R}{{I\!\!R}}
\newcommand{\N}{{I\!\!N}}

\def\R{{\rm I}\! {\rm R}}

\def\X{{\bf X}}

\newtheorem{theorem}{Theorem}[section]

\newtheorem{remark}{Remark}[section]

\newcommand{\OT}{\mathcal O}

\begin{document}

\pagestyle{headings}

\title{Iterative Splitting Methods for Coulomb Collisions in Plasma Simulations}
\author{J\"urgen Geiser
\thanks{Ruhr University of Bochum, Department of Electrical Engineering and Information Technology, Universit\"atsstrasse 150, D-44801 Bochum, Germany, E-mail: juergen.geiser@ruhr-uni-bochum.de}}

\maketitle

\begin{abstract}
In this paper, we present splitting methods that are based on 
iterative schemes and applied to plasma simulations.
The motivation arose of solving the Coulomb collisions,
which are modeled by nonlinear stochastic differential equations. 
We apply Langevin equations to model the
characteristics of the collisions and we obtain coupled nonlinear 
stochastic differential equations, which are delicate to solve. 
We propose well-known deterministic splitting schemes that can be extended 
to stochastic splitting schemes, by taking into account the 
stochastic behavior. The benefit decomposing the different
equation parts and solve such parts individual is taken
into account in the analysis of the new iterative splitting schemes.
Numerical analysis and application to
various Coulomb collisions in plasma applications are presented.

\end{abstract}

{\bf Keywords}: splitting methods, stochastic differential equations, iterative splitting schemes, particle simulations, Coloumb collisions, convergence analysis, Langevin equation.\\

{\bf AMS subject classifications.} 35K25, 35K20, 74S10, 70G65.

\section{Introduction}

We are motivated to develop fast algorithms to
solve Coulomb collisions in plasma simulations.
Such modeling equations results in characteristics
equations, which are nonlinear stochastic differential
equations with different time-scales.
Based on the nonlinearities and multiscale problems
such differential equations are solved by
higher order stochastic solvers, e.g.,  Milstein scheme,
see \cite{kloeden1992} and \cite{cohen2010}.
Such solvers are direct or non-iterative and have the
drawback in missing relaxations of such nonlinear parts, 
see \cite{geiser_2011} and \cite{kelley95}.

Therefore, we propose new iterative splitting schemes, see \cite{geiser2013},
which allow to obtain higher order accuracy with 
a nonlinear solver effect which is related to the fixpoint scheme,
see \cite{geiser_2016}.

In the paper, we discuss the two directions of solver methods for the nonlinear
stochastics differential equations
\begin{itemize}
\item Direct methods: Euler-Maruyama and Milstein schemes, see \cite{kloeden1992},
\item Indirect methods: Iterative splitting schemes, see \cite{geiser_2011}.
\end{itemize}

From the methodological point of the methods, we have historically two ideas 
for algorithms to solve the Coulomb collisions in particle simulations.
Such methods are based on finite-sized particles, whose characteristics are lying on a grid  (e.g. particle-in-cell (PIC) simulation). Here, we have the 
following methods:
\begin{itemize}
\item Binary algorithm: Particles in a finite cell are 
organized into discrete pairs of interacting particles.
The collision is based on the scattered velocities through an angle whose
statistical variance is dictated by the theory of Coulomb collisions
\cite{nanbu1997} and \cite{taki1977}.
\item Test particle algorithm: The collisions are modeled by defining test
and field particles. The velocity of the test-particle is modeled by 
Langevin equations with drag and diffusion coefficients, influenced by 
the moments of the field-particle velocity distribution,
which are deposited on the space mesh \cite{cohen2006}, \cite{jones1996}, 
\cite{lemons2009}, \cite{mann1997} and \cite{sherlock2008}.
\end{itemize}

The underlying model equation for the particle simulation is the Fokker-Planck equation, which is given as
\begin{eqnarray}
\frac{\partial}{\partial t} f({\bf v}) = - \frac{\partial}{\partial {\bf v}} ({\bf F}_d(v) f({\bf v})) + \frac{1}{2} \frac{ \partial^2}{\partial {\bf v} \partial {\bf v}} ( D(v) f(v)) ,
\end{eqnarray}
where ${\bf F}_d = \langle \Delta {\bf v}/ \Delta t \rangle$ and $D = \langle \Delta {\bf v} {\bf v}/ \Delta t \rangle$, and $\langle \cdot \rangle$ are the expected values, which are given as ensemble-averaged drag and diffusion coefficients (see the derivation in \cite{dimits2010} and \cite{dimits2013}). 

Based on the Fokker-Planck equation, we can shift to the 
velocity dependent Langevin equation with an embedded collision 
operator, which is related to an explicit derivation, e.g., \cite{eric1994}.

For a test particle with velocity $v$ we have the following equation:
\begin{eqnarray}
&& dv(t) = F_d(v) dt + \sqrt{2D_v(v)} dW_v(t) , \\
&& d\mu(t) = - 2 D_a(v) \mu dt + \sqrt{2D_a(v) (1 - \mu^2)} dW_{\mu}(t) , \\
&& d\phi(t) = \sqrt{\frac{2D_a(v)}{(1 - \mu^2)}} dW_{\phi}(t) , \\
&& v_0 = 1.0 , \mu(0) = 0, \phi(0) = 1.0 ,
\end{eqnarray}
where the coordinates $(v, \mu = \cos(\theta), \phi)$
are the underlying spherical coordinates given as $(v, \theta, \phi)$ of the test particle. 
$F_d$ is an ensemble-averaged drag, and $D_v$ and $D_a$ are the diffusion
coefficients. Furthermore, $W_v$, $W_{\mu}$ and $W_{\phi}$ are independent 
of the Wiener processes and $v_0$, $\mu_0$ and $\phi_0$ are the initial-conditions.

The paper is outlined as following. In the Section \ref{oper}, we discuss the 
iterative splitting method for the stochastic differential equations and the 
convergence analysis. The numerical algorithms of the direct and
indirect methods are presented in Section \ref{algorithms}.
 The numerical results are discussed in Section \ref{numer} and we conclude our results in Section \ref{conclu}

\section[Iterative Splitting Method]{Iterative Splitting Method for Stochastic Ordinary Differential Equations}
\label{oper}

The following algorithm is based on the iteration with
a fixed-splitting discretization step-size $\tau$. For the
time-interval $[t^n,t^{n+1}]$, we solve the following sub-problems
consecutively for $i=1, 3, \dots 2m+1$, (cf. \cite{gei_2009_5}):
\begin{eqnarray}
 && d c_i(t) = A
c_i(t) dt \; + \; B c_{i-1} dW_t(t), \;
\mbox{with} \; \; c_i(t^n) = c^{n} \label{kap3_iter_1} \\
&& \mbox{and} \; c_{i}(t^n) = c^n \; , \; c_{0} = 0.0 , \nonumber
\\\label{kap3_iter_2}
&& d c_{i+1}(t) = A c_i(t) \; dt  + \; B c_{i+1}(t) \; dW_t, \; \\
&& \mbox{with} \; \; c_{i+1}(t^n) = c^{n}\; , \nonumber
\end{eqnarray}
 where $c^n$ is the known split approximation at the 
time-level $t=t^{n}$. The split approximation at the time-level $t=t^{n+1}$ 
is defined as $c^{n+1}=c_{2m+2}(t^{n+1})$. Furthermore, $W$ is a Wiener process, see \cite{kloeden1992}.

We can rewrite this into the form of the following ordinary differential equation (ODE):
\begin{eqnarray}
 && \frac{\partial c_i(t)}{\partial t} =  A c_i(t) \; + \; B c_{i-1} \dot{W}_t , \;
\mbox{with} \; \; c_i(t^n) = c^{n} \label{kap3_iter_1} \\
&& \mbox{and} \; c_{i}(t^n) = c^n \; , \; c_{0} = 0.0 , \nonumber
\\\label{kap3_iter_2}
&& \frac{\partial c_{i+1}(t)}{\partial t} = A c_i(t) \;  + \; B c_{i+1}(t) \; \dot{W}_t, \; \\
&& \mbox{with} \; \; c_{i+1}(t^n) = c^{n}\; , \nonumber
\end{eqnarray}
where $ \dot{W}_t = \frac{d W_t}{d t}$.

We present the results of the consistency of our
iterative method extended to stochastic operators, see \cite{geiser2013}. 
For simplicity, we assume the system of operators are
generators of a $C_0$-semigroup based on their underlying operator norms.

\begin{theorem}
Let us consider the abstract Cauchy problem in a Banach space \X
\begin{equation}
\begin{array}{c}
{\displaystyle \partial_t c(x, t) = A c(x, t) + B c(x, t) \dot{W}_t },  \;  x \in \Omega \times [0, T] , \\
\noalign{\vskip 1ex} {\displaystyle c(x,0)=c_0(x) } \; x \in \Omega , \\
\noalign{\vskip 1ex} {\displaystyle c(x,t)=c_1(x, t) } \; x \in \partial \Omega \times [0, T], \\

\end{array} \label{eq:ACP}
\end{equation}

\noindent where $A,B: \! {\X} \rightarrow {\X} $ are given linear operators 
that are generators of the $C_0$-semigroup and $c_0 \in \X$ is a given element.

The iterative operator splitting method has the following splitting error:
\begin{eqnarray}
&& || (S_i - \exp(A \tau + B W)|| \le C \tau^{\frac{i+1}{2}},
\end{eqnarray}
where $S_i$ is the approximated solution for the i-th iterative step
and $C$ is a constant that can be chosen uniformly on bounded time
intervals.

\end{theorem}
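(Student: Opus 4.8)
The plan is to work with the error functions $e_k(t) := c(t) - c_k(t)$, where $c$ is the exact solution of \eqref{eq:ACP} on $[t^n,t^{n+1}]$ and $c_k$ is the $k$-th iterate, and to measure everything in the mean-square norm $\|\cdot\| = (\mathbb{E}\,\|\cdot\|_{\X}^2)^{1/2}$, since the half-integer powers of $\tau$ in the statement can only come from the $\sqrt{\tau}$-scaling of the Wiener increment $W = W_{t^{n+1}} - W_{t^n}$. Subtracting the two iteration equations from the Cauchy problem, and using that the exact and the split trajectories both start from $c^n$ at $t^n$, I obtain the coupled error recursions

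\begin{align}
d e_i &= A\,e_i\,dt + B\,e_{i-1}\,dW_t, \quad e_i(t^n) = 0, \label{err1}\\
d e_{i+1} &= A\,e_i\,dt + B\,e_{i+1}\,dW_t, \quad e_{i+1}(t^n) = 0. \label{err2}
\end{align}

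The first carries the defect through a \emph{stochastic} (diffusion) term, the second through a \emph{deterministic} (drift) term, and this asymmetry is precisely what produces a half-power gain at each sub-step.

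For \eqref{err1} I would use the variation-of-constants (Duhamel) representation $e_i(t) = \int_{t^n}^t \exp(A(t-s))\,B\,e_{i-1}(s)\,dW_s$ and then apply the It\^o isometry together with the $C_0$-semigroup bound $\|\exp(A(t-s))\| \le C$ on $[0,T]$. This gives $\mathbb{E}\,\|e_i(t)\|_{\X}^2 \le C\|B\|^2 \int_{t^n}^t \mathbb{E}\,\|e_{i-1}(s)\|_{\X}^2\,ds \le C\,\tau \sup_s \mathbb{E}\,\|e_{i-1}(s)\|_{\X}^2$, i.e. $\|e_i\| \le C\,\tau^{1/2}\,\|e_{i-1}\|$. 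For \eqref{err2} I would introduce the fundamental solution $U(t,s)$ of $dU = B\,U\,dW$, whose mean-square norm is $O(1)$ by an It\^o-formula argument, write $e_{i+1}(t) = \int_{t^n}^t U(t,s)\,A\,e_i(s)\,ds$, and observe that this is an ordinary $ds$-integral, so Minkowski's inequality and the bound on $U$ give $\|e_{i+1}\| \le C\,\tau\,\|e_i\|$. Chaining these estimates and inducting on the iteration index then accumulates a factor $\tau^{1/2}$ per stochastic sub-step, which after bookkeeping of the initial defect (with $c_0=0$) yields $\|e_i\| \le C\,\tau^{(i+1)/2}$; identifying $S_i = c_i(t^{n+1})$ and comparing with the exact flow written as $\exp(A\tau + BW)$ gives the stated bound.

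I expect two steps to be the genuine obstacles. The first is making the operator-valued stochastic calculus rigorous under the assumed operator-norm bounds: the solution operator $U(t,s)$ must be shown to exist and to be mean-square bounded uniformly on $[0,T]$, and the conditioning argument on $\mathcal{F}_s$ that lets me pull $\|A\,e_i(s)\|_{\X}$ out of the norm of $U(t,s)\,A\,e_i(s)$ has to be spelled out carefully. The second and more delicate point is the comparison with the target $\exp(A\tau + BW)$ itself: for non-commuting $A$ and $B$ this exponential is only a formal stand-in for the true solution operator, and even in the commuting case the It\^o correction $-\tfrac{1}{2} B^2\tau$ is absent from it. I would therefore either restrict to the commuting situation or absorb the commutator and correction terms — which are of order $\tau$ and higher — into the constant $C$, verifying that they do not degrade the leading half-integer order. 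Arranging the base case of the induction so that the first iterate already sits at the required half-power is the final piece of bookkeeping.
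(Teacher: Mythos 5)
Your error-recursion framework (Duhamel representation for each substep, It\^o isometry for the stochastic-defect equation $de_i = Ae_i\,dt + Be_{i-1}\,dW_t$, a mean-square bound on the fundamental solution $U(t,s)$ of $dU = BU\,dW$ for the drift-defect equation, then induction on $i$) is a genuinely different and more systematic route than the paper's own proof, which sets up no recursion at all: the paper expands the first two iterates in a stochastic Taylor series, compares them term by term with the expanded exact flow, verifies $\OT(\tau^{3/2})$ and $\OT(\tau^{2})$ for $i=1,2$ (even discarding a leftover term $\frac{1}{2}B^3 t W_t$ by hand and deferring it to $i=3$), and extrapolates the pattern in a remark. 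Your induction is the only argument of the two that could in principle cover all $i$. However, two of your steps fail to deliver the stated bound, and both are exactly the points where the paper quietly does something different.

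First, the base case. With $c_0 = 0$, as you assume, your own recursion gives $\|e_1\| \le C\tau^{1/2}\|e_0\| = \OT(\tau^{1/2})$, which falls short of the claimed $\tau^{(i+1)/2} = \tau$ at $i=1$ (for $i \ge 2$ your alternating gains of $\tau^{1/2}$ and $\tau$ do meet or beat the claim); no bookkeeping repairs this. The paper instead seeds the iteration with $c_0(t) = \exp(BW_t)\,c(t^n)$ --- stated only in passing after the first computation --- whose defect is already $\OT(\tau)$, and this is what produces its computed orders. Note also that the paper applies the $B$-explicit substep $\partial_t c_k = Ac_k + B\dot{W}_t c_{k-1}$ at \emph{every} iteration, gaining exactly half a power per step consistent with its remark $\OT(\tau^{1+i/2})$, whereas you alternate the two substeps as displayed in the algorithm; the two analyses therefore do not even compute the same sequence of orders, and neither reproduces the theorem's exponent $(i+1)/2$ literally. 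Second, the comparison target. You correctly observe that $\exp(A\tau + BW)$ is not the solution operator, but your fallback of absorbing the It\^o correction $-\frac{1}{2}BB^t\tau$ and the commutator terms into the constant $C$ is untenable: these discrepancies are additively $\OT(\tau)$ in mean square and cannot be hidden inside $C\tau^{(i+1)/2}$ once $i \ge 2$, and restricting to commuting operators does not remove the It\^o term either. The paper resolves this by silently changing the target, comparing the iterates against $c(t) = \exp((A - BB^t/2)\,t + BW_t)\,c(t^n)$; to close your induction you must do the same, i.e., prove the estimate against the true flow rather than against $\exp(A\tau + BW)$ as the theorem literally states.
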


\begin{proof}

The iterative steps are given in the following.

\begin{itemize}
\item
For the first iterations, we have:
\begin{equation}
\begin{array}{c}
 \partial_t c_1(t) = A c_1(t) + B \dot{W}_t c_0 , \quad t \in (t^n,t^{n+1}],
\end{array} \label{eq:err1}
\end{equation}
where we have the solution given as:
\begin{eqnarray}
 c_1(t) & = & \exp(A t) c(t^n) + \int_0^{t} \exp(A (t-s)) B  \dot{W}_t c(t^n) ds , \quad t \in (t^n,t^{n+1}], \\
& = & \exp(A t) c(t^n) \nonumber \\
&& + (I + A t) \int_0^{t} \exp(- A s) B \exp(B W_s) dW_s + \OT(t^{3/2}), \\
& = & \exp(A t) c(t^n) \nonumber \\
&& + (I + A t) \int_0^{t} (- A B s + B + B B^t W_s) dW_s + \OT(t^{3/2}), \\
& = & \exp(A t) c(t^n)  \nonumber \\
&& + (I + A t) \left( B W_t - A B  t W_t + \frac{1}{2} B B^t W_t^2 -  \frac{1}{2} B B^t t \right) + \OT(t^{3/2}), \nonumber \\ 
& = & ( I + A t + B W_t + \frac{1}{2} B B^t W_t^2 -  \frac{1}{2} B B^t t ) c(t^n) + \OT(t^{3/2}), 
 \label{eq:err1}
\end{eqnarray}
where $c_0(t) = \exp(B W_t) c(t^n)$. \\

Then, the consistency of the first iterative step is given 
in the following.

For $e_1$, we have:
\begin{eqnarray}
 c_1(t) & = & (I + A t + B W_t + \frac{1}{2} B B^t W_t^2 -  \frac{1}{2} B B^t t) c(t^n) + \OT(t^{3/2})) , \\
 c(t) & = & \exp((A-B B^t /2) t + B W_t)  c(t^n) \nonumber \\
     & = & (I + A t + B W_t + \frac{1}{2} B B^t W_t^2 -  \frac{1}{2} B B^t t) c(t^n) + \OT(t^{3/2})) ,
\end{eqnarray}

We obtain:
\begin{eqnarray}
&& || e_1 || = || c - c_1 || \le || \OT(t^{3/2}) . \nonumber
\end{eqnarray}

\item For the second iteration, we have:
\begin{equation}
\begin{array}{c}
 \partial_t c_2(t) = A c_2(t) + B \dot{W}_t c_1 , \quad t \in (t^n,t^{n+1}],
\end{array} \label{eq:err1}
\end{equation}
where we have the solution given as:
\begin{eqnarray}
 c_2(t) & = & \exp(A t) c(t^n) + \int_0^{t} \exp(A (t-s)) B  \dot{W}_t c_1(s) ds , \quad t \in (t^n,t^{n+1}], \\
& = & (I + A t) c(t^n) \nonumber \\
&& + (I + A t) \int_0^{t} (I - A s) B (I + A s) dW_s  \nonumber \\
&& +  (I + A t) \int_0^{t} (I - A s) B (I + A s) \int_0^s (I - A s_1) B (I + A s_1) dW_{s_1} \; dW_s  \nonumber \\
&& + \OT(t^{2}), \\
& = & (I + A t) c(t^n) \nonumber \\
&& + (I + A t) \int_0^{t} (B W_t - t A B W_t + \frac{1}{2} A B W_t t + B A t W_t s- \frac{1}{2} B A W_t t  \nonumber \\
&& +  (I + A t) (B^2 \frac{1}{2} W_t^2 - \frac{1}{2} B^2 t ) + \OT(t^{2}), 
 \label{eq:err1}
\end{eqnarray}
and we apply the second order
accurate integration of $\int_0^t AB W_s ds = \frac{1}{2} A B t W_t$. \\

Then, the consistency of the second iterative step is given 
in the following.

For $e_2$, we have:
\begin{eqnarray}
 c_2(t) & = & (I + A t + B W_t + \frac{1}{2} B B^t W_t^2 -  \frac{1}{2} B B^t t \\
&& \frac{1}{2}BA t W_t + \frac{1}{2}A t W_t ) c(t^n) c(t^n) + \OT(t^{2})) , \\
 c(t) & = & \exp((A-B B^t /2) t + B W_t)  c(t^n) \nonumber \\
     & = & (I + A t + B W_t + \frac{1}{2} B B^t W_t^2 -  \frac{1}{2} B B^t t\\
&& \frac{1}{2}BA t W_t + \frac{1}{2}A t W_t - \frac{1}{2}B^3 t W_t  ) c(t^n)+ \OT(t^{2})) ,
\end{eqnarray}
where we assume  $\frac{1}{2}B^3 t W_t \approx 0$.

We obtain:
\begin{eqnarray}
&& || e_2 || = || c - c_2 || \le \OT(t^{2}) . \nonumber
\end{eqnarray}

With the next iterative step $i=3$, we gain $\frac{1}{2}B^3 t W_t$
and we obtain a full second order scheme.
\end{itemize}

\end{proof}

\begin{remark}
We obtain a higher order scheme for the iterative splitting method. For 
each iterative step, we obtain additional a half order accuracy,
means $\OT(t^{1+\frac{1}{2} i})$, where $i=1,2,3, \ldots$, is the number of iterative
steps. 
\end{remark}

\section[Numerical Algorithms]{Numerical Algorithms for the Nonlinear Stochastic Ordinary Differential Equations}
\label{algorithms}

In the following, we deal with the different numerical algorithms
to solve the nonlinear stochastics differential equations.

We deal with the underlying nonlinear stochastics differential
equation, which is given as:
\begin{eqnarray}
\label{non_stoch}
&& d X = A(X) X dt + B(X) X dW , 
\end{eqnarray}
where $A, B$ are matrices in $\R^{m \times m}$ with $m$ is the number
of unknown. Further, the components of the matrices are dependent
of the solution $X$. Further, the initial values are given as $X_{t_0}= X_0$
and $W$ is Wiener process, see \cite{kloeden1992}.

In the following, we deal with the direct and indirect algorithms,
which are implemented in the numerical experiments.
The direct methods are numerical standard methods, which are
used in the numerical approximation of stochastic 
differential equations. They are simply to implement and
obtain direct the numerical solutions (one-step methods), while
they have their drawback in the resolution of the nonlinear
solutions, while the linearization is given by the 
time-step. Instead the indirect methods are iterative 
solvers and obtain higher order resolutions with additional
iterative cycles (multi-step methods), such that they allow
to resolve the nonlinear solution in the time-step approach,
see \cite{geiser_2011} and \cite{geiser_2016}.

\subsection{Direct Algorithms}

In the following, the standard numerical schemes for
solving the nonlinear stochastics equation (\ref{non_stoch}) are given:
\begin{itemize}
\item Euler-Maruyama scheme is given as:
\begin{eqnarray}
&& X_{n+1} = X_n + A(X_n) X_n \Delta t +  B(X_n) X_n (W_{t_{n+1}} - W_{t_n}),
\end{eqnarray}
for $n = 0, 1, \ldots, N-1$, $X_{0}= X_{t_0}$, and $\Delta t = t_{n+1} - t_n$
is the time-step. Further, $\Delta W = W_{t_{n+1}} - W_{t_n}$ is the stochastic
step based on a Wiener process, see \cite{kloeden1992}. 
\item Milstein scheme is given as:
\begin{eqnarray}
 X_{n+1} && = X_n + A(X_n) X_n \Delta t +  B(X_n) X_n (\Delta W) \nonumber  \\
     && + \frac{1}{2}  B(X_n) X_n \frac{\partial B(X) X}{\partial X}|_{X_n} ((\Delta W)^2 - \Delta^2),
\end{eqnarray}
for $n = 0, 1, \ldots, N-1$, $X_{0}= X_{t_0}$ and $\Delta t = t_{n+1} - t_n$
is the time-step. Further, $\Delta W = W_{t_{n+1}} - W_{t_n}$ is the stochastic
step based on a Wiener process, see \cite{kloeden1992}. 

\item A-B Splitting method, see the ideas in \cite{ninomya2008}, which is given as:

We assume, that we have an approximated solution of the nonlinear stochastic
differential equation (\ref{non_stoch}).
We assume the following fixpoint of the operators, which are
given as $A(X^{n}) \rightarrow \tilde{A}$ and $B(X^{n}) \rightarrow \tilde{B}$ for $n \rightarrow \infty$, where $X^n = X(t^{n})$.

Then, we obtain:
\begin{eqnarray}
&& X_{n+1} = X_0 \exp((\tilde{A} - \frac{\tilde{B} \tilde{B}^t}{2}) (n+1) \Delta t + \tilde{B} \sum_{i = 1}^{n+1} \Delta W_{i-1} ),
\end{eqnarray}
where we assume $W = \{ W_t, t \ge 0 \}$ and 
$\Delta W_{i-1} = W_{i-1}(t_{n+1}) - W_{i-1}(t_{n})$, where $\Delta t = t_{n+1} - t_n$ and we assume an equidistant grid.

Then, we obtain the following A-B splitting approach:
\begin{eqnarray}
&& \tilde{X}_{n} = X_{n-1} \exp((\tilde{A} - \frac{\tilde{B} \tilde{B}^t}{2}) \Delta t) ,\\ 
&& X_{n} = \tilde{X}_{n} \exp(\tilde{B} \Delta W) ,
\end{eqnarray}
for $n = 0, 1, \ldots, N-1$, $X_{0}= X_{t_0}$.

\end{itemize}

\begin{remark}
The direct methods are fast to implement and obtain lower order results.
The numerical scheme have the following accuracy: $\OT(t^{\frac{1}{2}})$ for the
Euler-Maruyama scheme),  $\OT(t)$ for the Milstein scheme and 
$\OT(t^{\frac{1}{2}})$ for the AB-splitting scheme for large n $n \rightarrow \infty$. Here, the approach to higher order schemes are delicate, see \cite{kloeden1992}.
\end{remark}

\subsection{Indirect Algorithms (iterative splitting)}

In the following, we discuss the iterative splitting methods
for the nonlinear stochastic equation (\ref{non_stoch}).

\begin{itemize}
\item
First iterative step
\begin{eqnarray}
 X_{1,n}(t) = \phi_1(t) X_{n-1} , 
\label{iter_1_1}
\end{eqnarray}
where $\phi_1(t) = \exp(A(X_{n-1}) \Delta t)$ is the first order
approximation of the non-linear Magnus-expansion. \\

\item
Second iterative step
\begin{eqnarray}
&&  X_{2,n}(t) = X_{1,n}(t) \nonumber \\
&& + X_{1,n}(t) [B(X_{n-1}), \int_0^t  \exp(A(X_{n-1})s) dW_s ,  \quad t \in (t^n,t^{n+1}] , \nonumber \\
&& X_{2,n}(t) = X_{1,n}(t) + X_{1,n}(t) [B(X_{n-1}) , C_1(t)] ,  \quad t \in (t^n,t^{n+1}] , \nonumber \\
&& X_{2,n}(t) = X_{1,n}(t) + X_{1,n}(t) C_2(t) ,  \quad t \in (t^n,t^{n+1}] ,  
 \label{eq:err1}
\end{eqnarray}
where $C_1(t) = \int_0^t \exp(A(X_{n-1}) s) dW_s$ $ \Delta W_i = (W_{t_{i+1}} - W_{t_i})$,
for $n = 0, 1, \ldots, N-1$, $X_{0}= X_{t_0}$.

The stochastic integral is computed as Stratonovich integral:
\begin{eqnarray}
\label{int_1_1}
 && C_1(\tilde{t}) = \int_0^{\tilde{t}}  \exp(A(X_{n-1}) s) dW_s \\
&& = \sum_{j=0}^{N-1}  \exp(A(X_{n-1}) (\frac{t_{j} + t_{j+1}}{2})) \;  (W(t_{j+1}) - W(t_j)) , \nonumber \\
&& \Delta t = \tilde{t} /N, t_{j} = \Delta t + t_{j-1}, t_0 = 0 ,
\end{eqnarray}
and the commutator $[\cdot, \cdot]$ is computed as:
\begin{eqnarray}
\label{int_1_2_1}
C_2(t) = [B(X_{n-1}), C_1(t)] = B(X_{n-1}) C_1(t) - C_1(t) B(X_{n-1}) ,
\end{eqnarray}
which is based on the different random variables of $C_1(t)$.
Additionally, in the
scalar case, the commutator is not equal to zero.

\item
Third iterative step
\begin{eqnarray}
 && X_{3,n}(t) =  X_{2,n}(t) + X_{1,n}(t) \; \int_0^t [B(X_{n-1}), \exp(s A(X_{n-1}))] \cdot \\
&& \cdot [B(X_{n-1}), \int_0^{s} \exp(A(X_{n-1}) s_1) ds_1] \;  ds , \nonumber \\
&& X_{3,n}(t) =  X_{2,n}(t) + X_{1,n}(t) \; \int_0^t [B(X_{n-1}), \exp(s A(X_{n-1}))] \;  C_2(s) \;  ds , \nonumber \\
&& X_{3,n}(t) =  X_{2,n}(t) + X_{1,n}(t) \;  C_3(t) , 
 \label{eq:err1}
\end{eqnarray}
where $ \Delta W_i = (W_{t_{i+1}} - W_{t_i})$,
for $n = 0, 1, \ldots, N-1$, $X_{0}= X_{t_0}$.

The operator $C_3(t)$ is computed as:
\begin{eqnarray}
\label{equ2}
  C_3(t) && = \sum_{j=0}^{N-1}  \left( B(X_{n-1}) \exp(A(X_{n-1}) \; (\frac{t_{j} + t_{j+1}}{2})) \; C_2(\frac{t_{j} + t_{j+1}}{2}) \right. \\
&& \left. - \exp(A(X_{n-1}) \; (\frac{t_{j} + t_{j+1}}{2})) B \; C_2(\frac{t_{j} + t_{j+1}}{2}) \right) , \nonumber \\
&& \Delta t = t /N, t_{j} = \Delta t + t_{j-1},  t_0 = 0 ,
\end{eqnarray}
where $C_2(\frac{t_{j} + t_{j+1}}{2})$ is computed with (\ref{int_1_2_1}), for each $\tilde{t} = \frac{t_{j} + t_{j+1}}{2}$, $j=0, \ldots, N-1$.

\end{itemize}

\begin{remark}
The indirect methods are based on the iterative approaches
related to fixpoint-schemes and obtained higher order accuracy: $\OT(t^{i+\frac{1}{2}})$,
where $i$ is the number of iterative steps.
Based on their recursive behavior numerical approaches in previous 
iterative steps can be used. Such a clever combination of the previous computed
iterative cycles allows to obtain fast iterative methods, see \cite{geiser_2011_2}
and \cite{geiser_2011_3}.
\end{remark}

\section{Numerical Examples}
\label{numer}

In the following numerical examples, we verify the theoretical results and
the benefits of the novel iterative solvers for the
stochastic differential equations.

We deal with the following examples and discuss the methodological sense of 
the different schemes:
\begin{itemize}
\item  Scalar benchmark problem (scalar multiplicative noise): 
The stochastic differential equations are based on $m \times m$ operator matrices,
while we have a scalar stochastic term. For such a benchmark examples, we can 
detailed analyze the benefit of the iterative scheme, which is related to the
higher order approach.
\item Vectorial benchmark problems (vectorial multiplicative noise): 
The stochastic differential equations are based on $m \times m$ operator matrices
and we have vectorial stochastic terms. Such vectorial examples need additional, so
called outer-diagonal entries for the standard scheme, see \cite{kloeden1992} and \cite{dimits2013}. 
For the iterative schemes, we have also an extension to resolves such multiple integrals
based on the vectorial stochastics, see \cite{tocino2009}.
Here, we can analyze the benefit of the additional terms and the higher accuracy of the 
novel methods. Further, we also extend such problems to larger operator matrices to see the
computational amount of the different schemes.
\item Real-life problem (Coulomb test-particle):
Here, we test a system of nonlinear stochastic differential equations with vectorial stochastic terms.
Such examples are delicate to solve and we apply the different standard and novel schemes.
For such problems, we see the benefit of the iterative splitting methods, which combine
the linear and nonlinear solvers. We relax the solution based on the iterative
approach and obtain much more accurate results. 
\end{itemize}

\subsection{Scalar multiplicative noise}

In the following, we deal with a simple chemical reaction
model, while the reaction part is influenced via stochastic noise.

We deal first with an ordinary differential equation and separate 
the complex operator into two simpler operators: 
the $m \times m$ ordinary differential equation 
system given as:
\begin{eqnarray}
\label{num_7}
&& d {\bf y}(t) = A {\bf y}(t) +  P {\bf y}(t) \; d W (t) , \\
&& A = \left( \begin{array}{c c c c}
 - \lambda_{1,1} & \lambda_{2,1}  & \dots & \lambda_{1,10}   \\
    \lambda_{2,1} & -  \lambda_{2,2} &  \dots  & \lambda_{2,10}  \\
\vdots \\
   \lambda_{10,1} &  \lambda_{10,2} & \dots & - \lambda_{10,10} \\
\end{array} \right) =  \left( \begin{array}{c c c c}
 - 1 & 0  & \dots & 0   \\
   0.1 & - 1 &  \dots  & 0  \\
\vdots \\
   0.1 & 0.1 & \dots & - 1 \\
\end{array} \right) \nonumber \\
&& P = \left( \begin{array}{c c c c}
 \sigma_{1,1} &  \sigma_{1,2} & \dots &  \sigma_{1,10}    \\
  \sigma_{2,1} &  \sigma_{2,2} &  \dots  &  \sigma_{2,10}  \\
\vdots \\
  \sigma_{10,1}  &  \sigma_{10,2} & \dots &  \sigma_{10,10} \\
\end{array} \right)  = \left( \begin{array}{c c c c}
 0.01 &  0  & \dots & 0    \\
 0.005 & 0.01 &  \dots  & 0   \\
\vdots \\
  0.005  & 0.005 & \dots & 0.01  \\
\end{array} \right) \nonumber \\
 &&  dW(t) = d W_1(t) \mbox{(stochastic scalar)} , \nonumber \\
&&  y(0) = (1, \ldots, 1)^t \mbox{(initial conditions)} \nonumber ,
\end{eqnarray}
where $\lambda_{11} \ldots \lambda_{10, 10} \in \R^+$ are the decay
factors and $\sigma_{11}, \ldots,  \sigma_{10,10} \in \R^+$ are the 
parameters of the perturbations. We deal with non-commutation matrices 
$[A, P] = A P - P A$ as given with the tridiagonal matrices
in the experiment.

We have the time interval $t \in [0,T]$ and $m \in \N$.

We apply the following numerical schemes:
\begin{itemize}
\item
The application of the standard Euler-Maruyama scheme is given as:
\begin{eqnarray}
&& y_{n+1} = y_n + A y_n \Delta t +  P y_n \Delta W ,
\end{eqnarray}
for $n = 0, 1, \ldots, N-1$, $y_{0}= y_{t_0}$, $\Delta t = t_{n+1} - t_n$, $\Delta W = W_{t_{n+1}} - W_{t_n} = \sqrt{\Delta t} N(0,1)$, where $N(0,1) = rand$ is a normally distributed random variable. 
\item
 Milstein scheme is given as:
\begin{eqnarray}
 y_{n+1} && = y_n +  A y_n \Delta t +  P y_n (\Delta W) \nonumber  \\
     && + \frac{1}{2} P P^t y_n ((\Delta W)^2 - \Delta t), 
\end{eqnarray}
for $n = 0, 1, \ldots, N-1$, $y_{0}= y_{t_0}$.
\item Recursive Splitting scheme is given as:
\begin{eqnarray}
\label{recursive_1}
\tilde{y}_{n+1} && = \exp((A - \frac{P P^t }{2}) \Delta t) y_n ,  \\
\label{recursive_2}
 y_{n+1}   && =  \exp(P \Delta W) \tilde{y}_{n+1} , 
\end{eqnarray}
for $n = 0, 1, \ldots, N-1$, $y_{0}= y_{t_0}$.

\item Summative Splitting scheme is given as:
\begin{eqnarray}
\label{summative_1}
\tilde{y}_{n+1} && = \exp((A - \frac{P P^t }{2}) \Delta t) y_n ,  \\
\label{summative_2}
 y_{n+1}   && =  \exp(P \frac{1}{\sqrt{\tilde{N}}}  \sum_{j=1}^{\tilde{N}} \Delta W_j) \tilde{y}_{n+1} , 
\end{eqnarray}
$\Delta W_j = (W(\tilde{t}_{j+1}) - W(\tilde{t}_j)) = \sqrt{\delta t} N(0,1)$, where $N(0,1) = rand$ is a normally distributed random variable. Further the intermediate time-steps are given as $\delta t = \Delta t / \tilde{N}$, $\tilde{t}_{j+1} = \delta t + \tilde{t}_{j}$, $\tilde{t}_1 = t_n$ and the time-intervals are given as $n = 0, 1, \ldots, N-1$, $y_{0}= y_{t_0}$.

\item Iterative splitting scheme:

{\bf Version 1: 2 iterative steps}

Second iterative step:
\begin{eqnarray}
&& X_{2,n}(t) = X_{1,n}(t) + X_{1,n}(t) C_2(t) ,  \quad t \in (t^n,t^{n+1}] ,  
 \label{eq:err1}
\end{eqnarray}
where the commutator is given as:
\begin{eqnarray}
\label{int_1_2}
C_2(t) = [P, C_1(t)] = P C_1(t) - C_1(t) P ,
\end{eqnarray}
where $C_1(t) = \int_0^t \exp(A s) dW_s$

The stochastic integral is computed as a Stratonovich integral:
\begin{eqnarray}
\label{int_1_1}
 && C_1(\tilde{t}) = \int_0^{\tilde{t}}  \exp(A s) dW_s \\
&& = \sum_{j=0}^{N-1}  \exp(A(\frac{t_{j} + t_{j+1}}{2})) \;  (W(t_{j+1}) - W(t_j)) , \nonumber \\
&& \Delta t = \tilde{t} /N, t_{j} = \Delta t + t_{j-1}, t_0 = 0 ,
\end{eqnarray}
where $ \Delta W_i = (W_{t_{i+1}} - W_{t_i})$,
for $n = 0, 1, \ldots, N-1$, $X_{0}= X_{t_0}$.

{\bf Version 2: 3 iterative steps}
\begin{eqnarray}
\label{iterative_3}
&& X_{3,n}(t) =  X_{2,n}(t) + X_{1,n}(t) \;  C_3(t) , 
 \label{eq:err1}
\end{eqnarray}
where $ \Delta W_i = (W_{t_{i+1}} - W_{t_i})$,
for $n = 0, 1, \ldots, N-1$, $X_{0}= X_{t_0}$.

The operator $C_3(t)$ is computed as:
\begin{eqnarray}
\label{equ2}
  C_3(t) && = \sum_{j=0}^{N-1}  \left( B \exp(A(\frac{t_{j} + t_{j+1}}{2})) \; C_2(\frac{t_{j} + t_{j+1}}{2}) \right. \\
&& \left. - \exp(A(\frac{t_{j} + t_{j+1}}{2})) B \; C_2(\frac{t_{j} + t_{j+1}}{2})  \;   (W(t_{j+1}) - W(t_j)) \right) , \nonumber \\
&& \Delta t = t /N, t_{j} = \Delta t + t_{j-1},  t_0 = 0 ,
\end{eqnarray}
where $C_2(\frac{t_{j} + t_{j+1}}{2})$ is computed with (\ref{int_1_2}), for each $\tilde{t} = \frac{t_{j} + t_{j+1}}{2}$, $j=0, \ldots, N-1$.

\end{itemize}

We compare the following schemes:
\begin{itemize}
\item First order (or strong convergence $\OT(t^{1/2})$)
\begin{itemize}
\item EM (Euler-Maruyama): explicit first order Runge-Kutta scheme, see \cite{kloeden1992}.
\item rS (recursive Splitting): modified Lie-Trotter splitting scheme for the
  stochastic term, see Equation (\ref{recursive_1})-(\ref{recursive_2}) and \cite{geiser2013_1} and \cite{geiser2013}.
\item sS (summative Splitting): modified Lie-Trotter splitting scheme with improved computation of the stochastic term, see Equation (\ref{summative_1})-(\ref{summative_2}) and \cite{geiser2013_1} and \cite{geiser2013}.
 \end{itemize}
\item Second order (or strong convergence $\OT(t^{1})$)
\begin{itemize}
\item Mil (Milstein): explicit second order Runge-Kutta scheme, see \cite{kloeden1992}.
\item NV (Niomiya-Victori Splitting): modified Strang-Splitting scheme for the stochastic terms, see \cite{ninomya2008} and \cite{ninomya2009}.
\item iterative splitting ($i=2$)
 \end{itemize}
\item Third order (or strong convergence $\OT(t^{3/2})$)
\begin{itemize}
\item iterative splitting ($i=3$): modified iterative splitting scheme for the stochastic terms, see Equation (\ref{iterative_3}) and \cite{geiser2013}.
 \end{itemize}
\end{itemize}

In the following, we present the results of the lower order
schemes in Figure \ref{lower_real}.
\begin{figure}[ht]
\begin{center}  
\includegraphics[width=5.0cm,angle=-0]{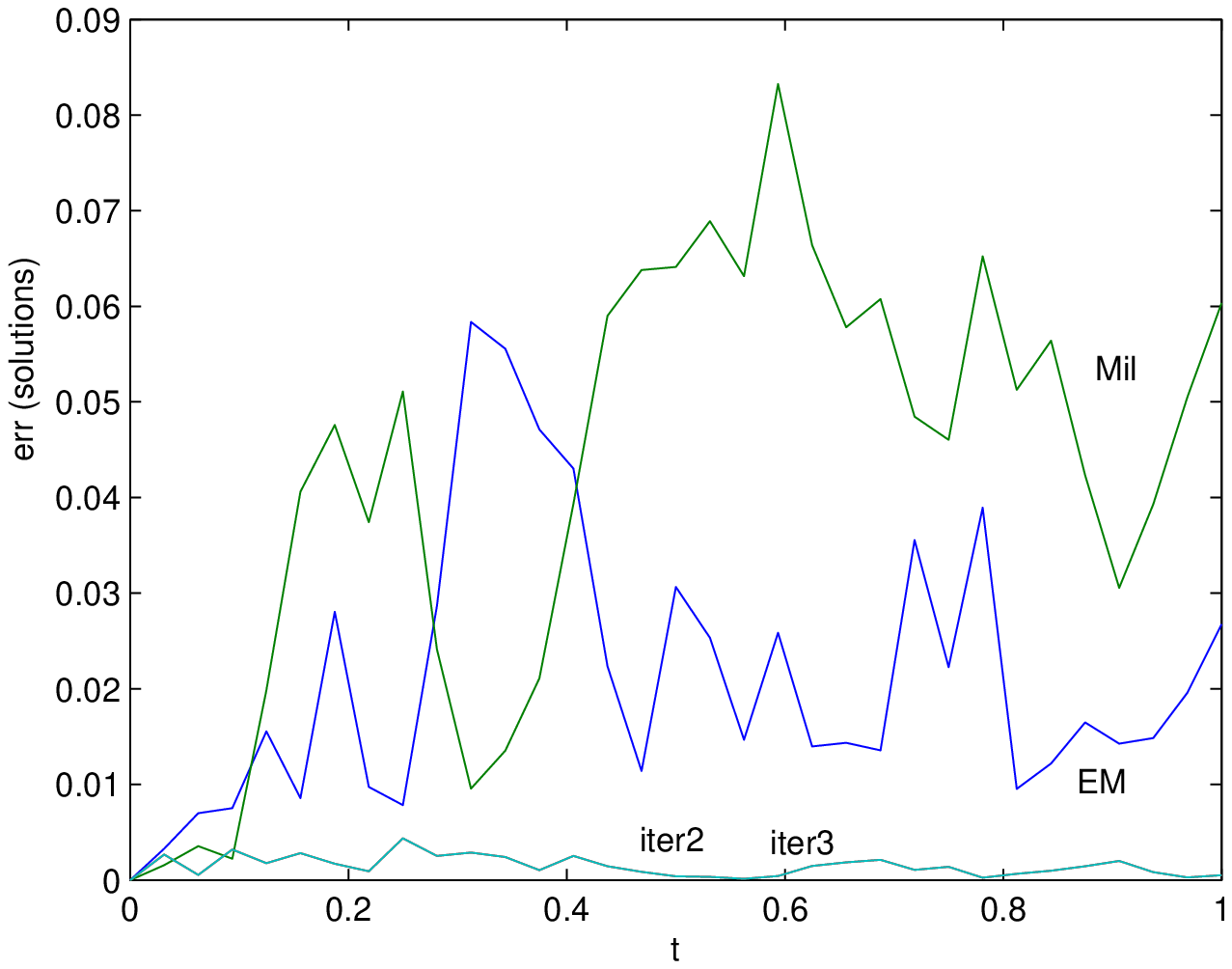}
\includegraphics[width=5.0cm,angle=-0]{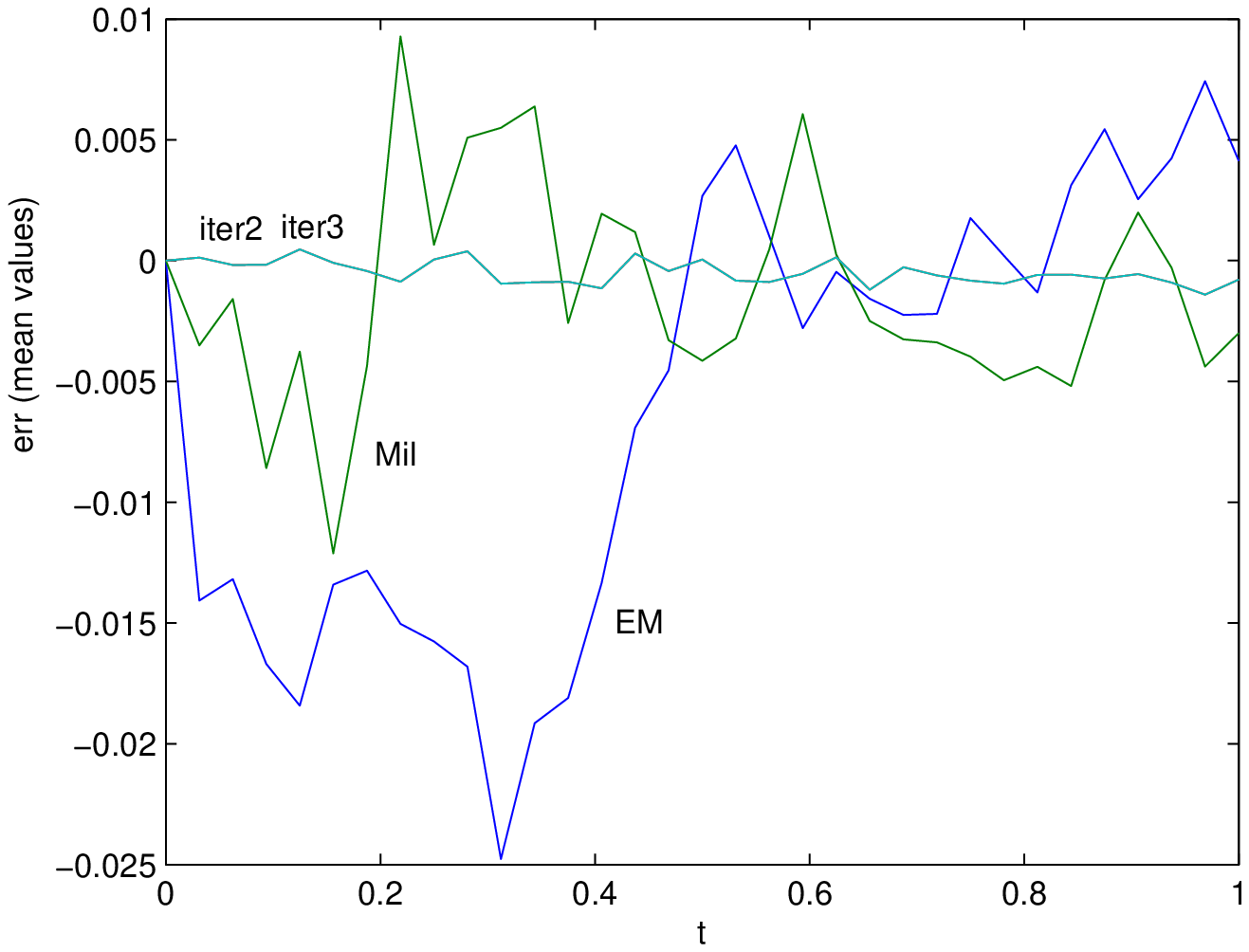} 
\includegraphics[width=5.0cm,angle=-0]{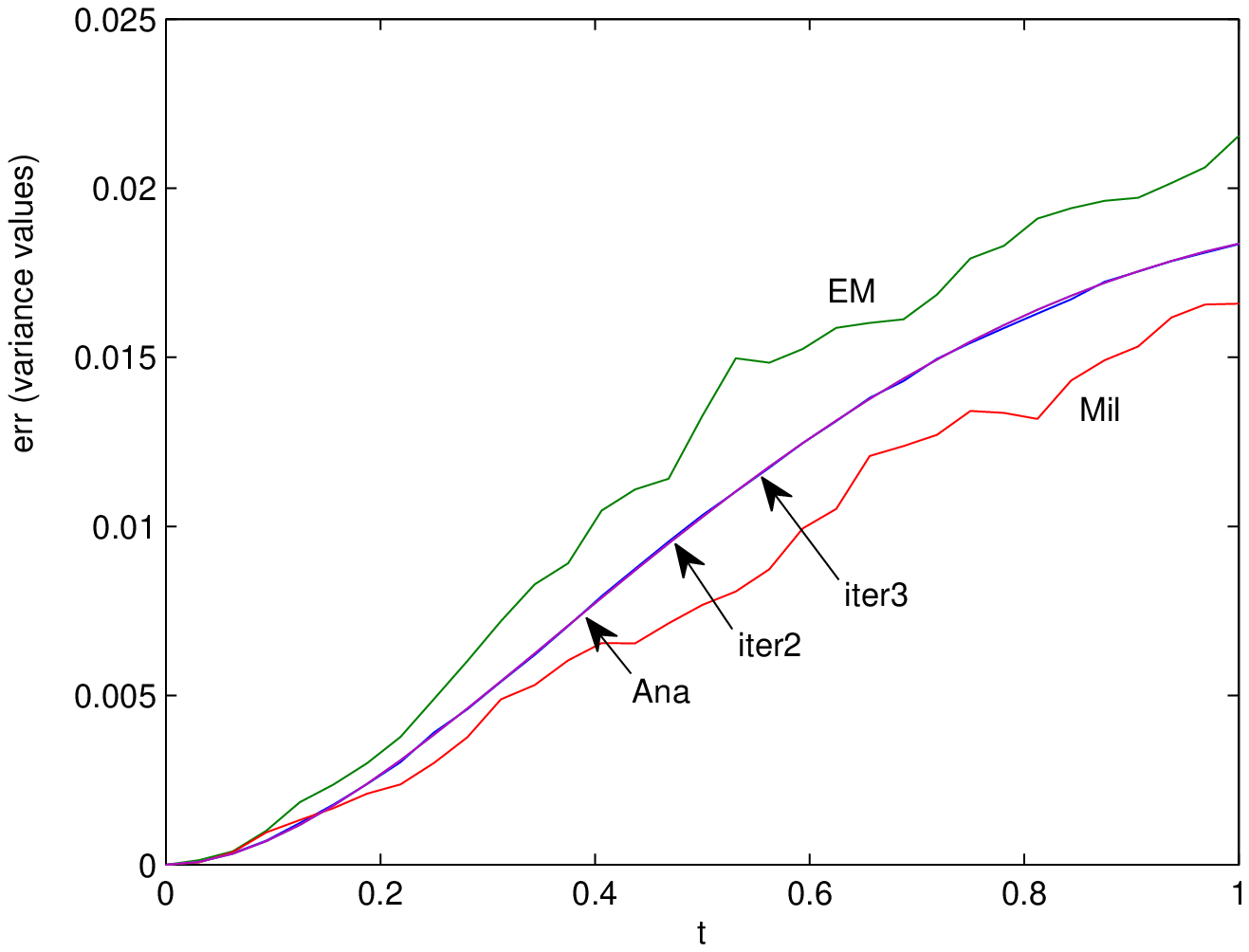} 
\end{center}
\caption{\label{lower_real} The upper right figure presents the results of the
EM, rS, sS-schemes (difference between exact and numerical solutions). 
The upper left figure present the results of the mean values (mean value of the difference between the exact and numerical solutions). The lower figure presents the variance of the schemes.}
\end{figure}

%

%

\begin{remark}
In the multiplicative noise example, we present the benefits of the 
iterative splitting schemes, which resolves the stochastic behavior more accurate as the standard schemes.
While Euler-Maruyama and Milstein schemes are explicit methods, the iterative approach is based on an implicit idea to relax the oscillations via additional iterative steps, see \cite{geiser_2011}. Based on these characteristics, we could reduce the numerical errors of the novel schemes with additional iterative steps.
\end{remark}

\subsection{Vectorial Multiplicative Noise (simple)}

In the following, we deal with a reduced $2 \time 2$ simple chemical reaction
model, but with non-commuting operators.

We deal first with an ordinary differential equation and separate 
the complex operator into two simpler operators: 
the $2 \times 2$ ordinary differential equation 
system given as:
\begin{eqnarray}
\label{num_7}
&& d {\bf y}(t) = A {\bf y}(t) + \sum_{j=1}^2 P_j {\bf y}(t) \; d W_j (t) , \\
&& A = \alpha_1 \left( \begin{array}{c c}
- \frac{1}{2} & 0 \\
  0 & - \frac{1}{2} 
\end{array} \right) , \\
&& P_1 = \alpha_2 \left( \begin{array}{c c}
\frac{3}{4} & \frac{1}{10}  \\
 0 &  - \frac{3}{4} 
\end{array} \right) , \\
&& P_2 = \alpha \left( \begin{array}{c c}
0 & \frac{9}{10}  \\
\frac{9}{10} & 0
\end{array} \right) , \\
 &&  dW(t) = (d W_1(t) , d W_2) \mbox{(stochastic vector)} , \nonumber \\
&&  y(0) = (1, \ldots, 1)^t \mbox{(initial conditions)} \nonumber ,
\end{eqnarray}
where $[P_1, P_2] \neq  0$. 
We have the time interval $t \in [0,T]$ and $m = 2$.

We apply a weak perturbation with $\alpha_2 = 0.01$ and a high perturbation with
$\alpha_2 =1.0$, for $\alpha_1= 1.0$ we apply a moderate convection.

We apply $T = 1$ and we have $N = 20$ time steps, means $\Delta t = T/N$.

For the testing the different numerical methods, we have the following
analytical solution, see \cite{platen2010}:
\begin{eqnarray}
\label{num_7}
&& {\bf y}(t^{n+1}) = \exp \left( A \Delta t - \frac{1}{2} \sum_{j=1}^m P_j P_j^t \Delta t + \sum_{j=1}^m P_j \Delta W_j \right) \;  {\bf y}(t^{n}),
\end{eqnarray}
where $\Delta W_j = (W_{t_{n+1}, j} - W_{t_n, j}) = \sqrt{\Delta t} N_j(0,1)$, where $N_j(0,1) = rand_j$, where we have $j = 1, \ldots, m$ normally distributed random variables. 

We apply the following numerical schemes:
\begin{itemize}
\item
The application of the standard Euler-Maruyama scheme is given as:
\begin{eqnarray}
&& y_{n+1} = y_n + A y_n \Delta t +  \sum_{j=1}^m P_j y_n \Delta W_j ,
\end{eqnarray}
for $n = 0, 1, \ldots, N-1$, $y_{0}= y_{t_0}$, $\Delta t = t_{n+1} - t_n$, $\Delta W = (W_{t_{n+1}, 1} - W_{t_n, 1}, \ldots, W_{t_{n+1}, m} - W_{t_n, m} ) = (\sqrt{\Delta t} N_1(0,1), \ldots, \sqrt{\Delta t} N_m(0,1)$, where $N_i(0,1) = rand_i$, where we have $i = 1, \ldots, m$ normally distributed random variables. 
\item
 Milstein scheme (without outer-diagonal entries)is given as:
\begin{eqnarray}
 y_{n+1} && = y_n +  A y_n \Delta t +  \sum_{j=1}^m P_j y_n \Delta W_j \nonumber  \\
     && + ( \sum_{i=1}^m \frac{1}{2} P_i P_i^t y_n \; ((\Delta W_i)^2 - \Delta t) ),
\end{eqnarray}
for $n = 0, 1, \ldots, N-1$, $y_{0}= y_{t_0}$.
\item
 Milstein scheme (with outer-diagonal entries) is given as:
\begin{eqnarray}
 y_{n+1} && = y_n +  A y_n \Delta t +  \sum_{j=1}^m P_j y_n \Delta W_j \nonumber  \\
     &&+ ( \sum_{i=1}^m \frac{1}{2} P_i P_i^t y_n \; ((\Delta W_i)^2 - \Delta t) ) \nonumber \\
    && + \sum_{i=1}^m  \sum_{j=i+1}^m  \frac{1}{2} [P_i , P_j] (J_{ji} - J_{ij}) y_n  , 
\end{eqnarray}
for $n = 0, 1, \ldots, N-1$, $y_{0}= y_{t_0}$. The commutator is given as
$[P_i, P_j] = P_i P_j - P_j P_i$.

Further the $J_{ij}$ are given as:
\begin{eqnarray}
 J_{ji} && = \frac{1}{2} J_j J_i - \frac{1}{2} (a_{i0} J_j - a_{j0} J_i) , 
\end{eqnarray}
with $J_i = \Delta W_i = (W_{t_{n+1}, i} - W_{t_n, i} = \sqrt{\Delta t} N_i(0,1)$
and the coefficients are given as:
$a_{i0} = \Delta \tilde{W}_i $, where $\Delta \tilde{W}_i = \sqrt{\frac{\Delta t}{2 \pi^2}} N_i(0,1)$, where $N_i(0,1) = rand_i$,

\item Iterative splitting scheme:

{\bf Version 1: 1 iterative steps}

We apply:

$X(0) \rightarrow X(\Delta t) \rightarrow  X(2 \Delta t) \ldots $

Zero iterative step:
\begin{eqnarray}
&& X_{0,n+1} = exp(A \Delta t + \sum_{j = 1}^m P_j \Delta W_j) X_{0,n} , 
\end{eqnarray}
where $X_{1,0} = y(0)$ and we have $N$ time-steps with $\Delta t = T/n$
and $t^{n+1} = t^n + \Delta t$ with $n = 0, \ldots, N-1$.

First iterative step:
\begin{eqnarray}
&& X_{1,n+1} =  exp(A \Delta t)  X_{1,n} \\
&&  + \int_{0}^{\Delta t} \exp(A (t -s))  \left( \sum_{i=1}^m  P_i X_{1, s}  d W_{i, s} \right) , \nonumber  \\
&& =  exp(A \Delta t)  X_{1,n} \\
&&  + \int_{0}^{\Delta t} \exp(A (t -s)) \left( \sum_{i=1}^m  P_i \exp(A s + \sum_{j = 1}^m P_j W_{j, s}) d W_{i, s}   \right) X_{1,n}  , \nonumber   \\
&& =  exp(A \Delta t)  X_{1,n} \\
&& +  \sum_{j=1}^m P_j X_{1,n} \Delta W_j \nonumber  \\
     && + ( \sum_{i=1}^m \frac{1}{2} P_i P_i^t X_{1,n} \; ((\Delta W_i)^2 - \Delta t) ), \nonumber \\
    && + \sum_{i=1}^m  \sum_{j=i+1}^m  \frac{1}{2} [P_i , P_j] (J_{ji} - J_{ij})  X_{1,n}  , \nonumber
\end{eqnarray}
for $n = 0, 1, \ldots, N-1$, $y_{0}= y_{t_0}$. The commutator is given as
$[P_i, P_j] = P_i P_j - P_j P_i$.

Further the $J_{ij}$ are given as:
\begin{eqnarray}
 J_{ji} && = \frac{1}{2} J_j J_i - \frac{1}{2} (a_{i0} J_j - a_{j0} J_i) , 
\end{eqnarray}
with $J_i = \Delta W_i = (W_{t_{n+1}, i} - W_{t_n, i} = \sqrt{\Delta t} N_i(0,1)$
and the coefficients are given as:
$a_{i0} = \Delta \tilde{W}_i $, where $\Delta \tilde{W}_i = \sqrt{\frac{\Delta t}{2 \pi^2}} N_i(0,1)$, where $N_i(0,1) = rand_i$,

We obtain the Milstein scheme with outer-diagonal entries.

{\bf Version 2: 2 iterative steps}

Second iterative step:
\begin{eqnarray}
&& X_{2,n+1} =  exp(A \Delta t)  X_{2,n} \\
&&  + \int_{0}^{\Delta t} \exp(A (t -s))  \left( \sum_{i=1}^m  P_i X_{1, s}  d W_{i, s} \right) , \nonumber  \\
&& =  exp(A \Delta t)  X_{2,n} \\
&&  + \int_{0}^{\Delta t} \exp(A (t -s)) \left( \sum_{i=1}^m  P_i \bigg(  \exp(A s)  X_{1,n} \bigg. \right. \nonumber \\
&& \left. \bigg. + \int_{0}^{s} \exp(A (s -s_1))  ( \sum_{j=1}^m  P_j X_{1, s_1}  d W_{j, s_1} )   \bigg) d W_{i, s} \right)  , \nonumber   \\
&& =  exp(A \Delta t)  X_{2,n} \\
&& +  \sum_{j=1}^m P_j X_{2,n} \Delta W_j \nonumber  \\
     && + ( \sum_{i=1}^m \frac{1}{2} P_i P_i^t X_{2,n} \; ((\Delta W_i)^2 - \Delta t) ), \nonumber \\
    && + \sum_{i=1}^m  \sum_{j=i+1}^m  \frac{1}{2} [P_i , P_j] (J_{ji} - J_{ij})  X_{2,n}  , \nonumber \\
    && + \sum_{i=1}^m   \frac{1}{2} (P_i ( P_i^t P_i)^t) \left( (\frac{1}{3} (\Delta W_i)^2 - \Delta t) \Delta W_i \right) \; X_{2,n}  , \nonumber
\end{eqnarray}
for $n = 0, 1, \ldots, N-1$, $y_{0}= y_{t_0}$. The commutator is given as
$[P_i, P_j] = P_i P_j - P_j P_i$.

Further the $J_{ij}$ are given as:
\begin{eqnarray}
 J_{ji} && = \frac{1}{2} J_j J_i - \frac{1}{2} (a_{i0} J_j - a_{j0} J_i) , 
\end{eqnarray}
with $J_i = \Delta W_i = (W_{t_{n+1}, i} - W_{t_n, i} = \sqrt{\Delta t} N_i(0,1)$
and the coefficients are given as:
$a_{i0} = \Delta \tilde{W}_i $, where $\Delta \tilde{W}_i = \sqrt{\frac{\Delta t}{2 \pi^2}} N_i(0,1)$, where $N_i(0,1) = rand_i$,

We obtain a version which is nearly $\OT(\Delta t^{1.5})$ (and more accurate as the 
Milstein scheme).

\end{itemize}

For different case of strong and weak perturbations, we have the following Figures \ref{perturb_1}.
\begin{figure}[ht]
\begin{center}  
\includegraphics[width=5.0cm,angle=-0]{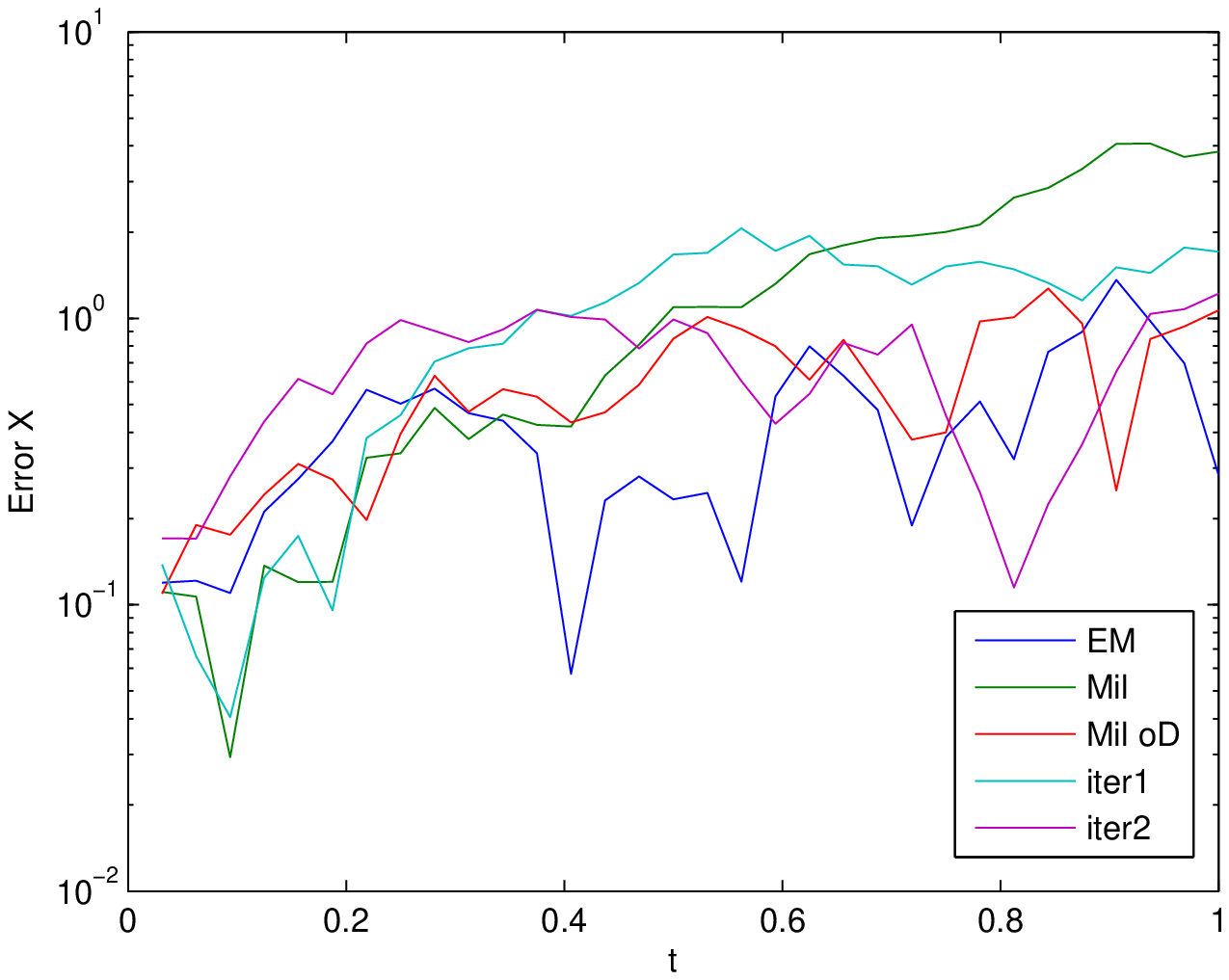} 
\includegraphics[width=5.0cm,angle=-0]{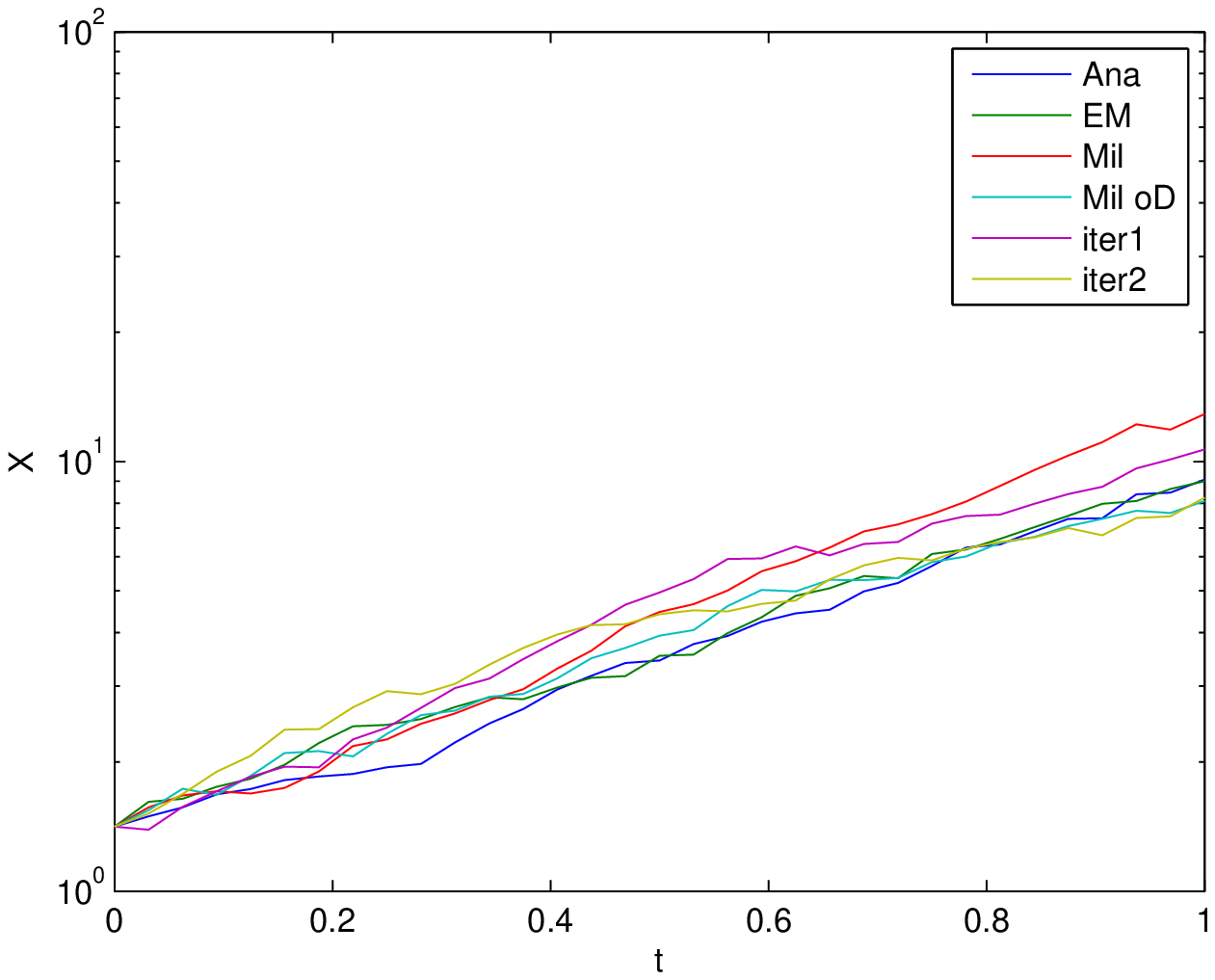} \\
\includegraphics[width=5.0cm,angle=-0]{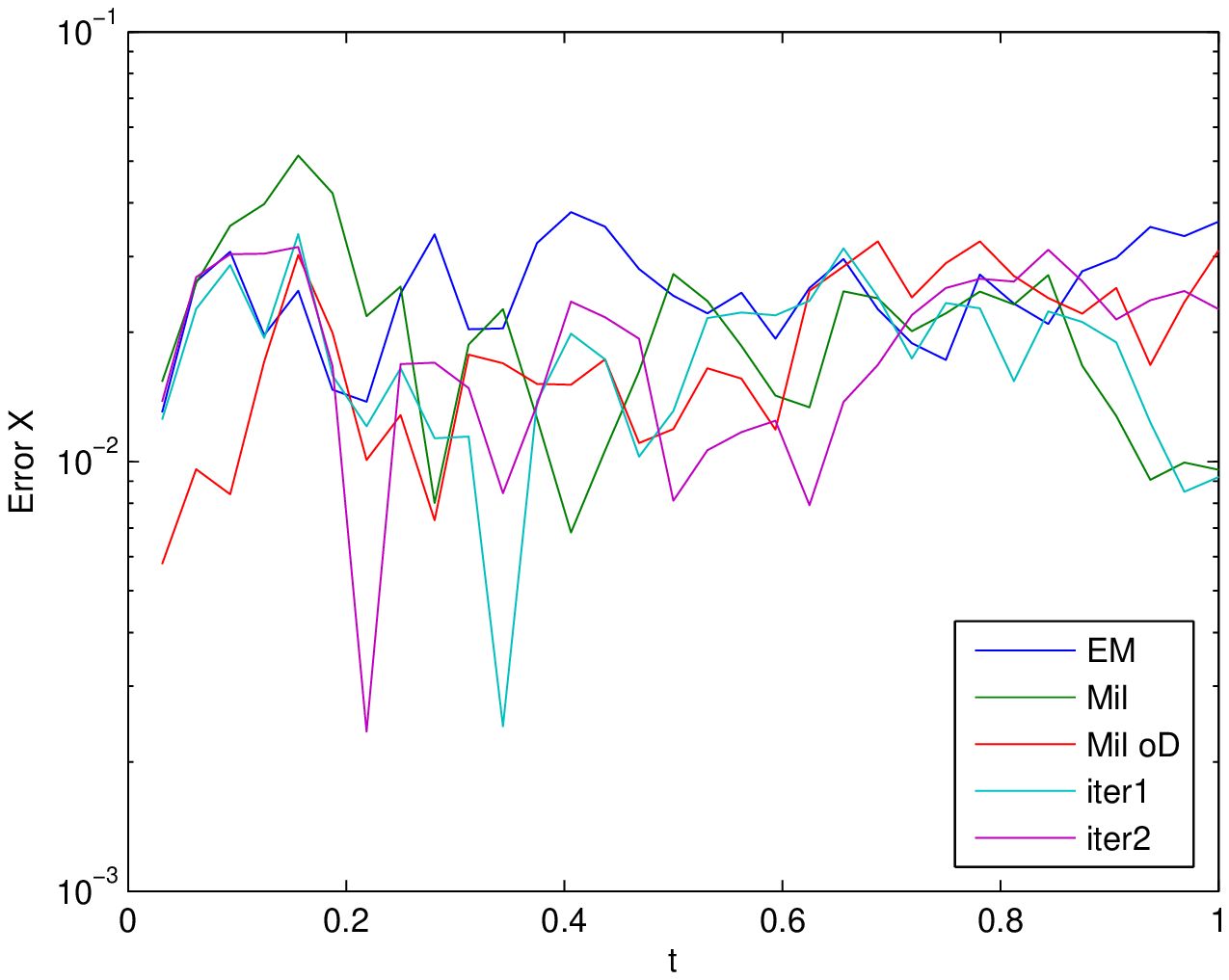} 
\includegraphics[width=5.0cm,angle=-0]{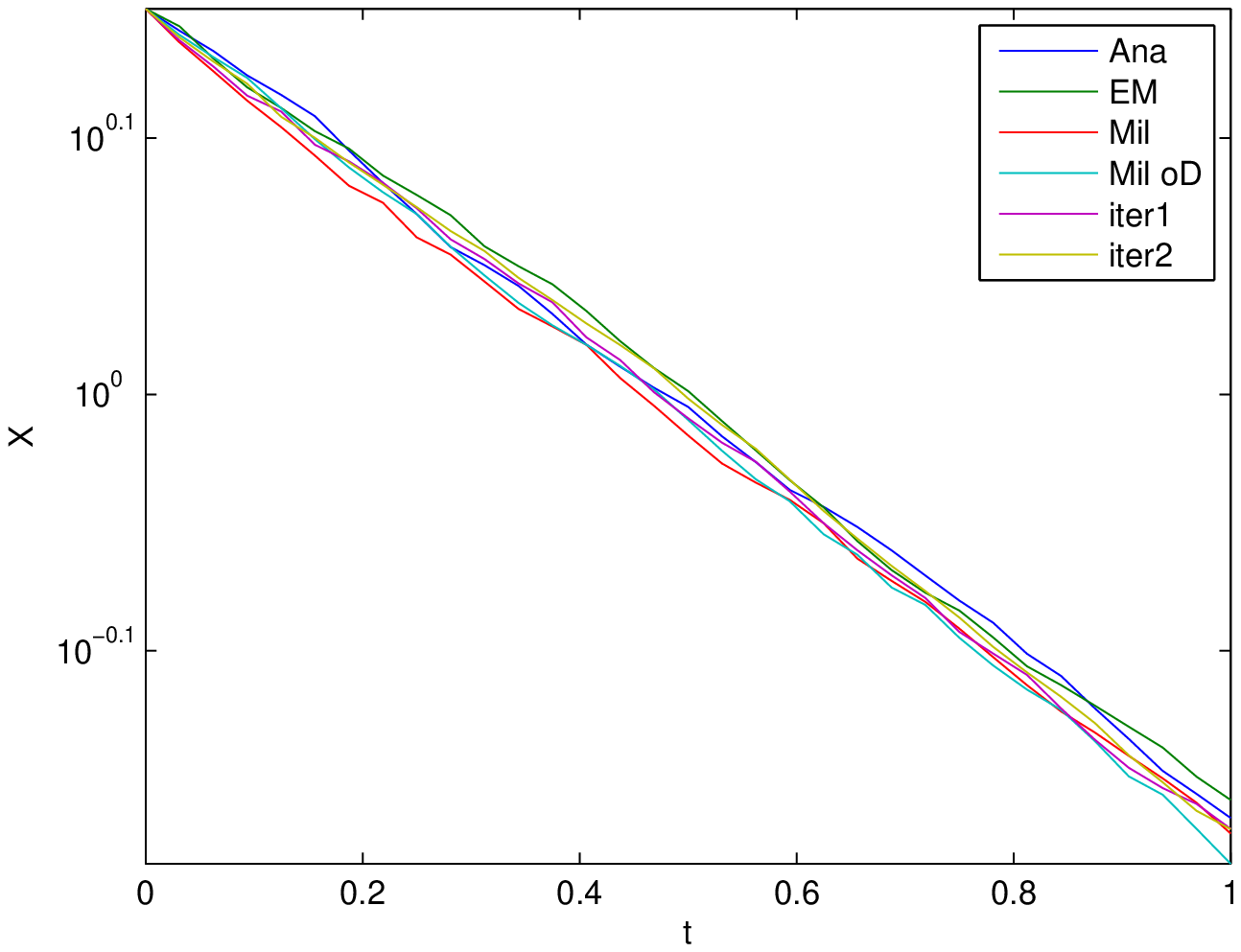}
\end{center}
\caption{\label{perturb_1}  The upper figures presents the results of the strong perturbations $\alpha_1 = 1.0, \; \alpha_2 = 1.0$ (left figure the numerical error, right figure the numerical solution). 
The right figures presents the results for the weak perturbations $\alpha_1 = 1.0, \; \alpha_2 = 0.1$ (left figure the numerical error, right figure the numerical solution).
We apply the following schemes: EM: Euler-Maruyama, Milstein, Iter1: 1-iterative steps, Iter2: 2-iterative steps.
}
\end{figure}

\begin{remark}
We can also verify the benefit of the additional terms, which are necessary to 
resolve the vectorial stochastics. Here, we obtain for both non-iterative and
iterative schemes higher order results for the extension of the schemes. In the numerical
implementations, we also receive the benefit of the exponential matrices related to the
iterative schemes, see \cite{geiser_2011_2}.
\end{remark}

\subsection{Vectorial Multiplicative Noise (non-commutative): Real-life example}

In the following, we deal with a simple chemical reaction
model, but in a vectorial manner.

We deal first with an ordinary differential equation and separate 
the complex operator into two simpler operators: 
the $m \times m$ ordinary differential equation 
system given as:
\begin{eqnarray}
\label{num_7}
&& d {\bf y}(t) = A {\bf y}(t) + \sum_{j=1}^m P_j {\bf y}(t) \; d W_j (t) , \\
&& A = \left( \begin{array}{c c c c}
 - \lambda_{1,1} & \lambda_{2,1}  & \dots & \lambda_{1,m}   \\
    \lambda_{2,1} & -  \lambda_{2,2} &  \dots  & \lambda_{2,m}  \\
\vdots \\
   \lambda_{m,1} &  \lambda_{m,2} & \dots & - \lambda_{m,m} \\
\end{array} \right) =  \left( \begin{array}{c c c c}
 - 1 & 0  & \dots & 0   \\
   \frac{1}{m} & - 1 &  \dots  & 0  \\
\vdots \\
   \frac{1}{m} & \frac{1}{m} & \dots & - 1 \\
\end{array} \right) \nonumber \\
&& P_1 = \left( \begin{array}{c c c c}
 \sigma_{1,1} &  \sigma_{1,2} & \dots &  \sigma_{1,m}    \\
  \sigma_{2,1} &  \sigma_{2,2} &  \dots  &  \sigma_{2,m}  \\
\vdots \\
  \sigma_{m,1}  &  \sigma_{m,2} & \dots &  \sigma_{m,m} \\
\end{array} \right)  = 0.05 \; \left( \begin{array}{c c c c}
 1 &  0  & \dots & 0    \\
 \frac{1}{m} & 1 &  \dots  & 0   \\
\vdots \\
 \frac{1}{m}  & \frac{1}{m} & \dots & 1  \\
\end{array} \right) \nonumber \\
&& P_2 = 0.05 \; \left( \begin{array}{c c c c}
 \tau_{1,1} &  \tau_{1,2} & \dots &  \tau_{1,m}    \\
  \tau_{2,1} &  \tau_{2,2} &  \dots  &  \tau_{2,m}  \\
\vdots \\
  \tau_{m,1}  &  \tau_{m,2} & \dots &  \tau_{m,m} \\
\end{array} \right)  = \left( \begin{array}{c c c c}
 1 &  \frac{1}{m} & \dots &  \frac{1}{m}    \\
 0  & 1 &  \dots  &  \frac{1}{m}  \\
\vdots \\
  0  & 0 & \dots & 1  \\
\end{array} \right) \nonumber \\
 &&  dW(t) = (d W_1(t), d W_2) \mbox{(stochastic vector)} , \nonumber \\
&&  y(0) = (1, \ldots, 1)^t \mbox{(initial conditions)} \nonumber , \\
\end{eqnarray}
where $\lambda_{11} \ldots \lambda_{m, m} \in \R^+$ are the decay
factors and $\sigma_{11}, \ldots,  \sigma_{m,m}, \tau_{11}, \ldots, \tau_{m,m} \in \R^+$ are the 
parameters of the perturbations. 
We have the time interval $t \in [0,T]$ and $m \in \N$.

For the testing the different numerical methods, we have the following
analytical solution, see \cite{platen2010}:
\begin{eqnarray}
\label{num_7}
&& {\bf y}(t^{n+1}) = \exp \left( A \Delta t - \frac{1}{2} \sum_{j=1}^m P_j P_j^t \Delta t + \sum_{j=1}^m P_j \Delta W_j \right) \;  {\bf y}(t^{n}),
\end{eqnarray}
where $\Delta W_j = (W_{t_{n+1}, j} - W_{t_n, j}) = \sqrt{\Delta t} N_j(0,1)$, where $N_j(0,1) = rand_j$, where we have $j = 1, \ldots, m$ normally distributed random variables. 

We apply the following numerical schemes:
\begin{itemize}
\item
The application of the standard Euler-Maruyama scheme is given as:
\begin{eqnarray}
&& y_{n+1} = y_n + A y_n \Delta t +  \sum_{j=1}^m P_j y_n \Delta W_j ,
\end{eqnarray}
for $n = 0, 1, \ldots, N-1$, $y_{0}= y_{t_0}$, $\Delta t = t_{n+1} - t_n$, $\Delta W = (W_{t_{n+1}, 1} - W_{t_n, 1}, \ldots, W_{t_{n+1}, m} - W_{t_n, m} ) = (\sqrt{\Delta t} N_1(0,1), \ldots, \sqrt{\Delta t} N_m(0,1)$, where $N_i(0,1) = rand_i$, where we have $i = 1, \ldots, m$ normally distributed random variables. 
\item
 Milstein scheme (without outer-diagonal entries)is given as:
\begin{eqnarray}
 y_{n+1} && = y_n +  A y_n \Delta t +  \sum_{j=1}^m P_j y_n \Delta W_j \nonumber  \\
     && + ( \sum_{i=1}^m \frac{1}{2} P_i P_i^t y_n \; ((\Delta W_i)^2 - \Delta t) ),
\end{eqnarray}
for $n = 0, 1, \ldots, N-1$, $y_{0}= y_{t_0}$.
\item
 Milstein scheme (with outer-diagonal entries) is given as:
\begin{eqnarray}
 y_{n+1} && = y_n +  A y_n \Delta t +  \sum_{j=1}^m P_j y_n \Delta W_j \nonumber  \\
     &&+ ( \sum_{i=1}^m \frac{1}{2} P_i P_i^t y_n \; ((\Delta W_i)^2 - \Delta t) ) \nonumber \\
    && + \frac{1}{2} \sum_{i=1}^m  \sum_{j=i+1}^m  \frac{1}{2} [P_i , P_j] (J_{ji} - J_{ij}) y_n  , 
\end{eqnarray}
for $n = 0, 1, \ldots, N-1$, $y_{0}= y_{t_0}$. The commutator is given as
$[P_i, P_j] = P_i P_j - P_j P_i$.

Further the $J_{ij}$ are given as:
\begin{eqnarray}
 J_{ji} && = \frac{1}{2} J_j J_i - \frac{1}{2} (a_{i0} J_j - a_{j0} J_i) , 
\end{eqnarray}
with $J_i = \Delta W_i = (W_{t_{n+1}, i} - W_{t_n, i}) = \sqrt{\Delta t} N_i(0,1)$
and the coefficients are given as:
$a_{i0} = \Delta \tilde{W}_i $, where $\Delta \tilde{W}_i = \frac{\sqrt{\Delta t}}{2 \pi^2} N_i(0,1)$, where $N_i(0,1) = rand_i$,

\item Iterative splitting scheme is given as in the previous example.

\end{itemize}

In the following, we have the computations of the non-commutative example.

The solution of the 10 species with the iterative scheme (2 steps) 
and the different schemes for the 10-th species is given in \ref{solu_1}.
\begin{figure}[ht]
\begin{center}  
\includegraphics[width=4.0cm,angle=-0]{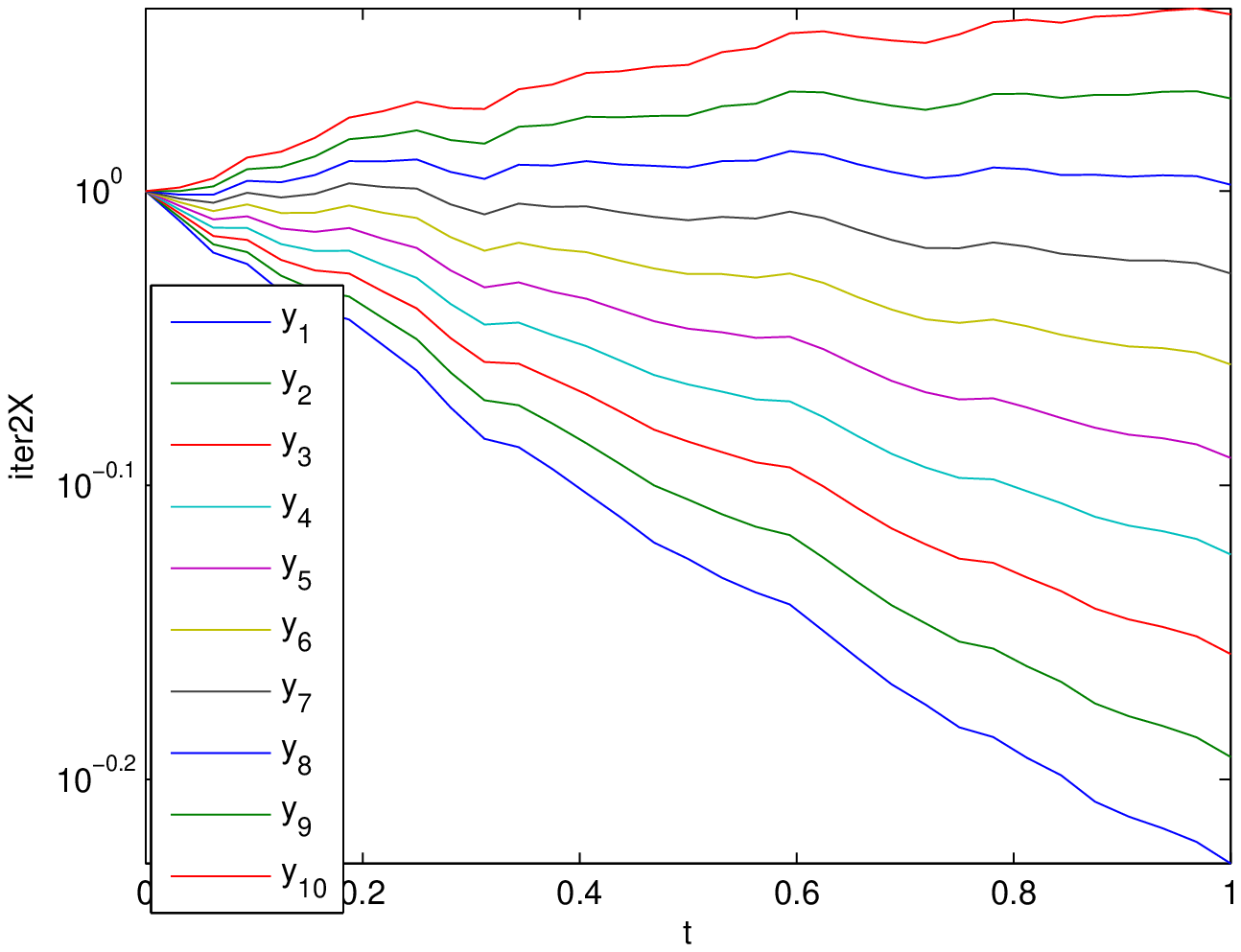}
\includegraphics[width=4.0cm,angle=-0]{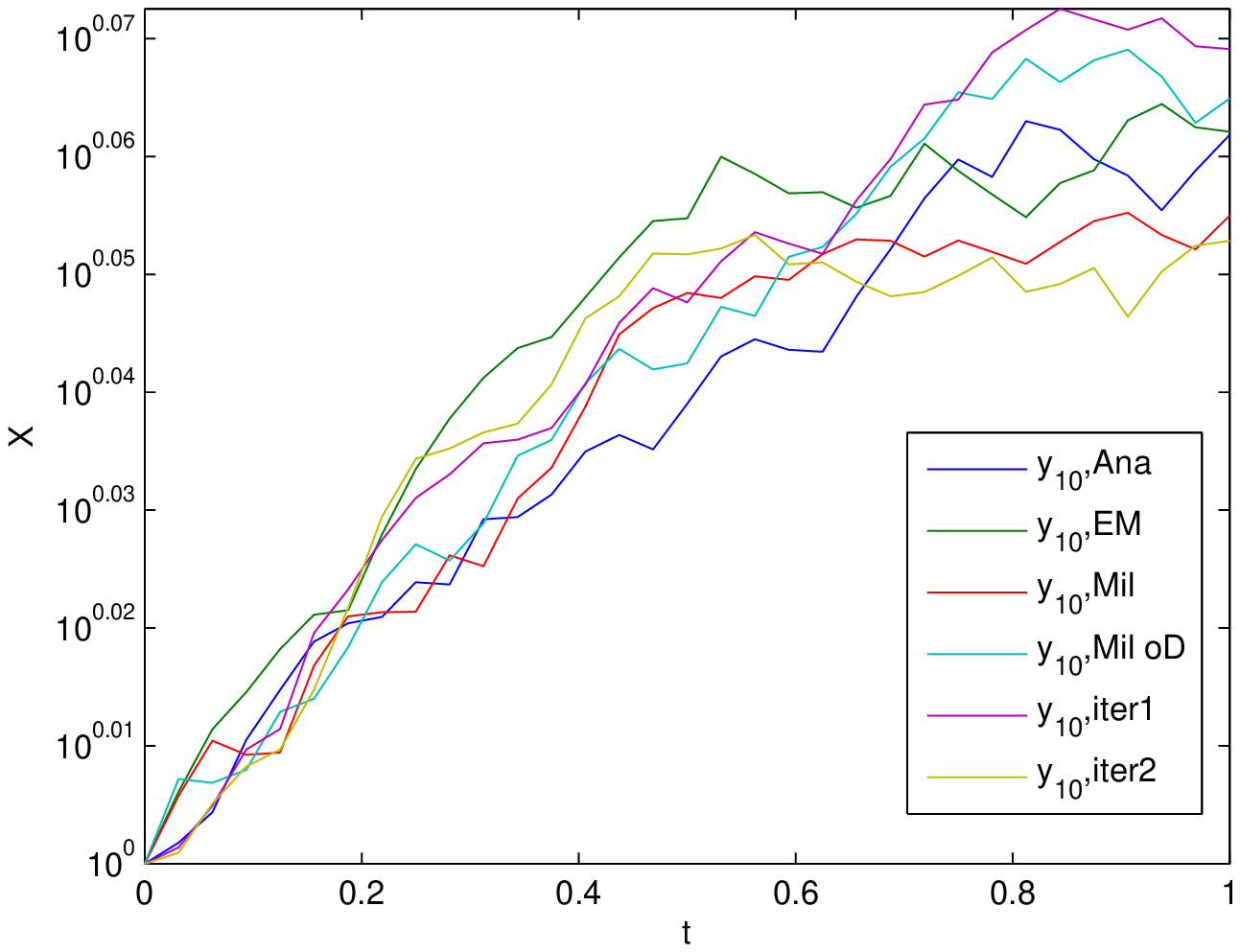}
\end{center}
\caption{\label{solu_1} The solution of the 10 species (left hand side) and the
solutions for the 10-th species with the different schemes (right hand side).}
\end{figure}

The errors of the different scheme with respect to the
$L_2$-norm, weak and strong error is given in Figure \ref{solu_2}.
\begin{figure}[ht]
\begin{center}  
\includegraphics[width=4.0cm,angle=-0]{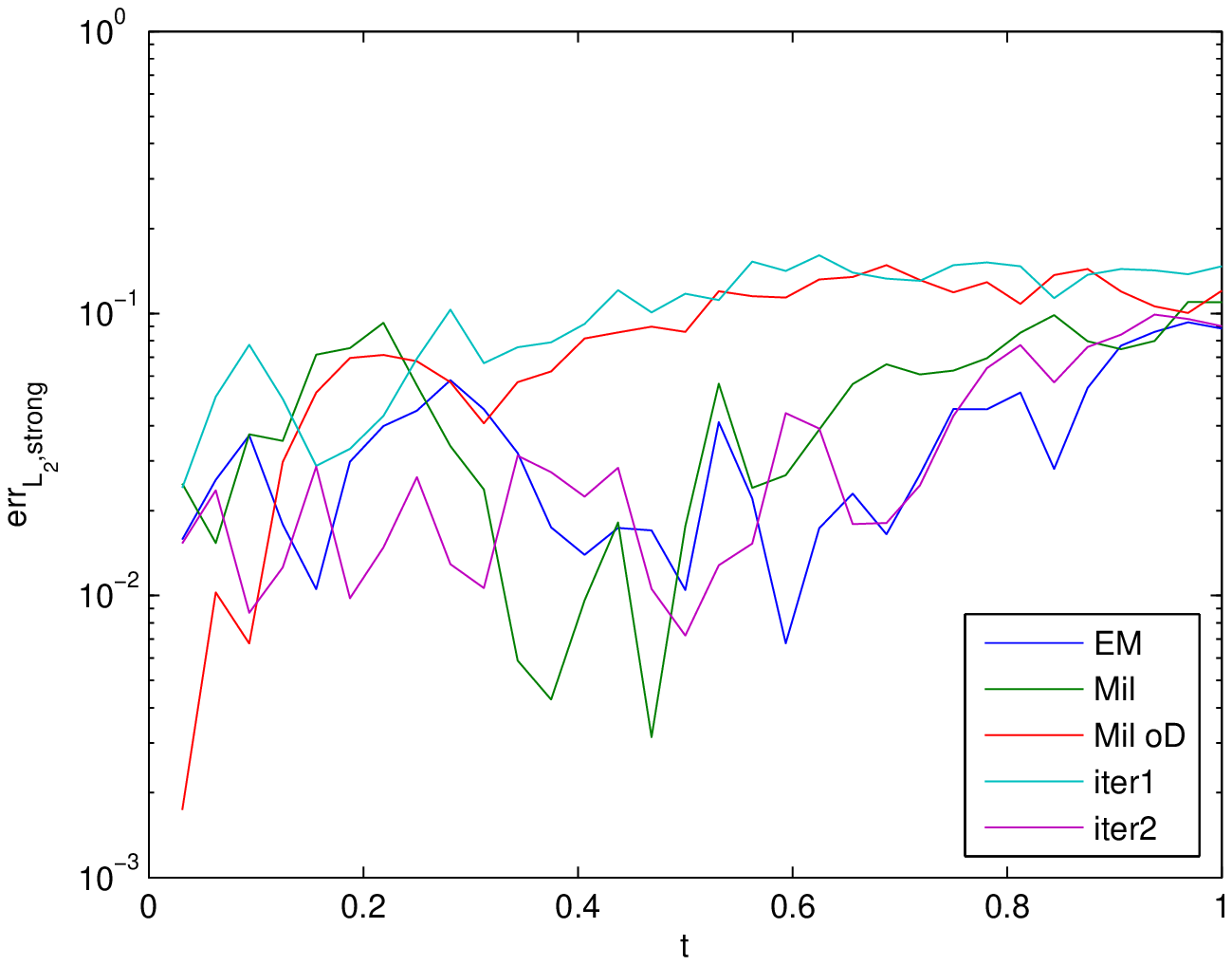}
\includegraphics[width=4.0cm,angle=-0]{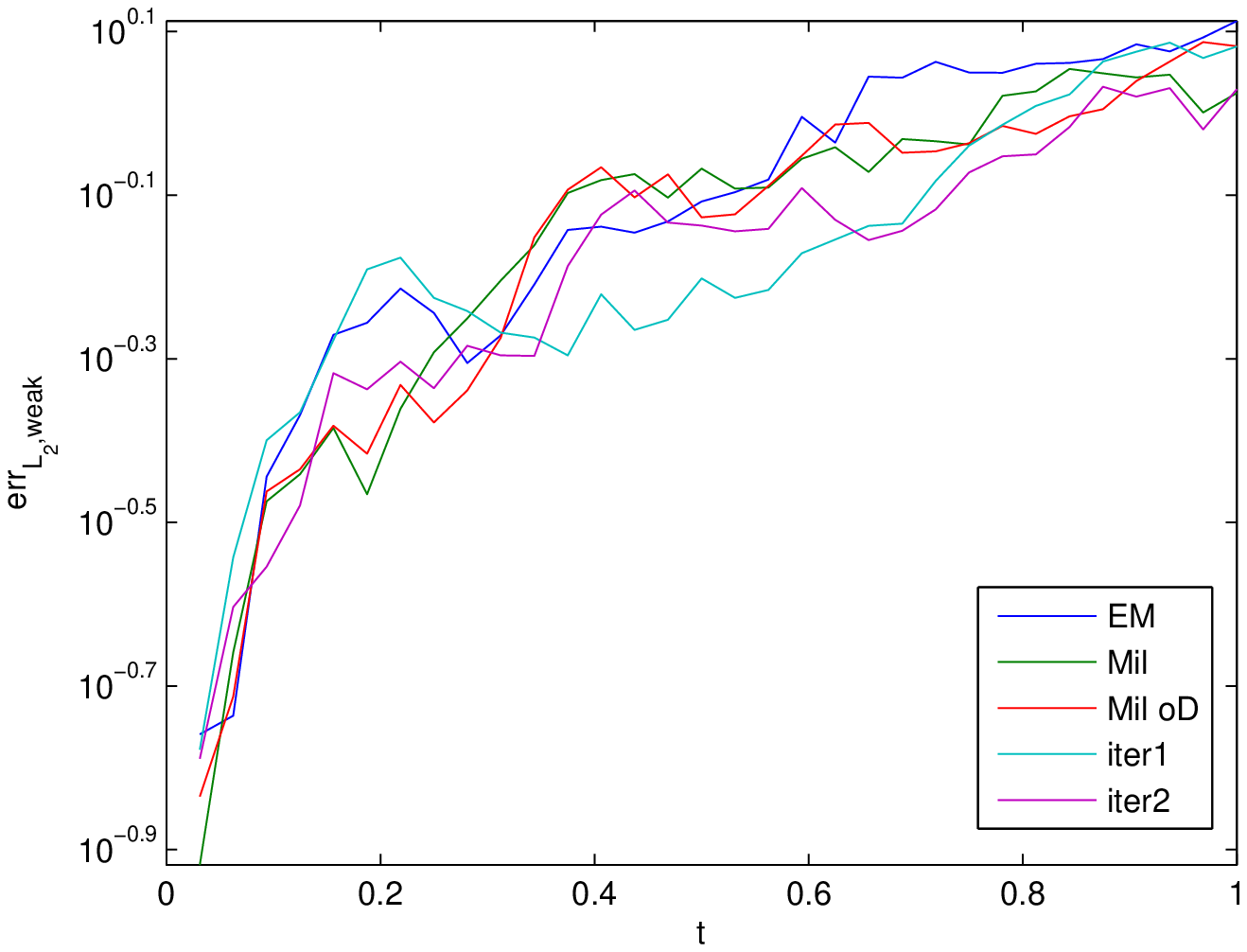}
\end{center}
\caption{\label{solu_2} The strong error of the numerical solution is given in the left figure.
The weak error of the numerical solution is given in the right figure.}
\end{figure}

\begin{remark}
For larger matrices, we also obtain a benefit, when we apply iterative methods.
We are more flexible as for the standard schemes, while we can increase the
order of the method with additional iterative steps. Further, the computational amount
for additional steps are marginal based on the recursive effect of the
iterative splitting scheme. Therefore, we can resolve the solution in the
same accuracy as a Milstein scheme with additional multiple integral terms, see \cite{kloeden1992}.
\end{remark}

\subsection{Coulomb test-particle problem (vectorial problem of the linearized Langevin equations)}

In the next example, we deal with a real-life problem, which models
the characteristics of a collision process, see \cite{dimits2013}.

We apply the following nonlinear SDE problem:
\begin{eqnarray}
&& dv(t) = F_d(v) dt + \sqrt{2D_v(v)} dW_v(t) , \\
&& d\mu(t) = - 2 D_a(v) \mu dt + \sqrt{2D_a(v) (1 - \mu^2)} dW_{\mu}(t) , \\
&& d\phi(t) = \sqrt{\frac{2D_a(v)}{(1 - \mu^2)}} dW_{\phi}(t) ,
\end{eqnarray}
where the functions and the derivatives of the convection and 
diffusion operators are given as:
\begin{eqnarray}
& D_v(v) = \frac{1}{2}  \frac{1}{v + 1} , & \frac{\partial D_v}{\partial v} = - \frac{1}{2}  (v + 1)^{-2} , \\
& F_d(v) = - \frac{1}{2}  \frac{1}{v + 1} , &  \frac{\partial F_d}{\partial v} = \frac{1}{2}  (v + 1)^{-2} ,  \\
& D_a(v) = \frac{1}{2}  \frac{1}{v + 1}  , & \frac{\partial D_a}{\partial v} = - \frac{1}{2}  (v + 1)^{-2} ,
\end{eqnarray}
and where we assume that the initial conditions are given as $v_0 = 1.0$, $\mu_0 = 1.0$ and $\phi_0 = 1.0$.

The notation of the equation in vectorial form is given as:
\begin{eqnarray}
&& d {\bf v}(t) = {\bf a}({\bf v}) dt + B({\bf v}) d{\bf W}_{\bf v}(t) , 
\end{eqnarray}
where $ {\bf v}(t) = (v, \mu, \phi)^t$ and the vectors and matrix are given as:

\begin{eqnarray}
&& {\bf a}({\bf v}) = \left( \begin{array}{c}
   F_d(v) \\
 - 2 D_a(v) \mu \\  
      0 
\end{array} \right),
d {\bf W}_{\bf v} = \left( \begin{array}{c}
 d W_v  \\
  d W_{\mu} \\ 
   d W_{\phi}
\end{array} \right), \\
&& B({\bf v}) = \left( \begin{array}{c c c }
 \sqrt{2D_v(v)}  & 0 & 0  \\
0 & \sqrt{2D_a(v) (1 - \mu^2)} & 0  \\
0 & 0 &  \sqrt{\frac{2D_a(v)}{(1 - \mu^2)}} 
\end{array} \right),
\end{eqnarray}

We apply the following numerical schemes:
\begin{itemize}
\item
The application of the standard Euler-Maruyama scheme is given as:
\begin{eqnarray}
&& v_{n+1} = v_n + F(v_n) \Delta t +  \sqrt{2 D(v_n)} \Delta W_v , \\
&& \mu_{n+1} = \mu_{n} - 2 D_a(v_n) \mu_n \Delta t + \sqrt{2  D_a(v_n) (1 - \mu_n^2)} \Delta W_{\mu} , \\
&& \phi_{n+1} = \phi_{n} + \sqrt{\frac{2  D_a(v_n)}{(1 - \mu_n^2)}} \Delta W_{\phi} , 
\end{eqnarray}
for $n = 0, 1, \ldots, N-1$, $v_{0}= v(0), \mu_0 = \mu(0), \phi_0 = \phi(0)$, 
$\Delta t = t_{n+1} - t_n$, $\Delta W_{i} = W_{i, t_{n+1}} - W_{i, t_n} = \sqrt{\Delta t} N_i(0,1)$, where $N_i(0,1) = rand$, $i = \{v, \mu, \phi\}$ are three independent normally distributed random variables. 
\item
 Milstein scheme is given as:
\begin{eqnarray}
 v_{n+1} && = v_n +  F(v_n) \Delta t +  \sqrt{2 D(v_n)} (\Delta W) \nonumber  \\
     && + \frac{\partial D(v)}{\partial v}|_{v_n}  \frac{1}{2} ((\Delta W)^2 - \Delta t), \\
 \mu_{n+1} && = \mu_{n} - 2 D_a(v_n) \mu_n \Delta t + \sqrt{2  D_a(v_n) (1 - \mu_n^2)} \Delta W_{\mu} \nonumber \\
 && - 2 \mu_n  D_a(v_n) \frac{1}{2}(\Delta W^2_{\mu} - \Delta t) \nonumber \\
 && +  \sqrt{\frac{D(v_n)}{D_a(v_n)}} \sqrt{(1 - \mu^2_n)}   \frac{\partial D_a(v)}{\partial v}|_{v_n}  A_{v, \mu} , \\
 \phi_{n+1} && =  \phi_{n} + \sqrt{\frac{2  D_a(v_n)}{(1 - \mu_n^2)}} \Delta W_{\phi} \nonumber  \\
&&  +  \sqrt{\frac{D(v_n)}{D_a(v_n)}} \frac{1}{\sqrt{(1 - \mu^2_n)}}   \frac{\partial D_a(v)}{\partial v}|_{v_n}  A_{v, \phi} \nonumber \\
&& +  \frac{2 D_a(v_n) \mu_n}{(1 - \mu^2_n)} A_{\mu, \phi} 
\end{eqnarray}
for $n = 0, 1, \ldots, N-1$, $v_{0}= v(0), \mu_0 = \mu(0), \phi_0 = \phi(0)$, 
$\Delta t = t_{n+1} - t_n$, $\Delta W_{i} = W_{i, t_{n+1}} - W_{i, t_n} = \sqrt{\Delta t} N_i(0,1)$, where $N_i(0,1) = rand$, $i = \{v, \mu, \phi\}$ are three independent normally distributed random variable.

The iterated Ito integral, which is related to Levy areas \cite{scheicher2007}, and given as:
\begin{eqnarray}
A_{k, l} = \int_{t^{n}}^{t^{n+1}}  d W_l(s) \int_{t^n}^{s} d W_k(\xi),
\end{eqnarray}
where we have for the outer-Diagonal case $k\neq l$:
\begin{eqnarray}
 A_{k,l} && = \frac{1}{2} J_k J_l - \frac{1}{2} (a_{l0} J_k - a_{k0} J_l) , 
\end{eqnarray}
with for $i = k,l$, we have  $J_i = \Delta W_i = (W_{t_{n+1}, i} - W_{t_n, i} = \sqrt{\Delta t} N_i(0,1)$
and the coefficients are given as:
$a_{i0} = \Delta \tilde{W}_i $, where $\Delta \tilde{W}_i = \sqrt{\frac{\Delta t}{2 \pi^2}} N_i(0,1)$, where $N_i(0,1) = rand_i$, see \cite{kloeden1992}.

\item Iterative splitting scheme:

We apply the following linearization techniques of the convective part and iterate via the diffusive part.

\begin{enumerate}
\item Fixpoint iterative version with simple relaxation of the nonlinear part is applied as:
\begin{eqnarray}
\label{iter_1_1}
&& d{\bf v}_{i+1}(t) = \hat{A}({\bf v}_{i})  {\bf v}_{i+1} dt + B({\bf v}_i)  d{\bf W}(t) ,
\end{eqnarray}
with the solution vector ${\bf v}_{i}(t) = (v_i(t), \mu_i(t), \phi_i(t))^t$.

Furthermore, the linearized matrix is given as
\begin{eqnarray}
\label{num_7}
&& \hat{A}({\bf v}_{i}) = 
\begin{bmatrix} \frac{F_v(v_i)}{v_{i}} & 0 & 0 \\
\\ 
0 & - 2 D_a(v_i) & 0 \\ 
\\
0 & 0 & 0 \end{bmatrix} ,
\end{eqnarray}
Then the fixpoint scheme is given as:
\begin{eqnarray}
\label{iter_1_1}
&& {\bf v}_{i+1}(t^{n+1}) = \exp(\hat{A}({\bf v}_{i}(t^{n+1})) \Delta t) \; {\bf v}(t^n) \nonumber \\
&& + \int_{t^n}^{t^{n+1}}  \exp(\hat{A}({\bf v}_{i}(t^{n+1})) \;( t^{n+1} - s ) \;  B({\bf v}_i(s))  d{\bf W}_{\bf v}(s) . 
\end{eqnarray}
where the integral is computed as: \\
1.) Trapezoidal-rule:
\begin{eqnarray}
\label{iter_1_1}
&& \int_{t^n}^{t^{n+1}}  \exp(\hat{A}({\bf v}_{i}(t^{n+1})) \;( t^{n+1} - s )) \;  B({\bf v}_i(s))  d{\bf W}_{\bf v}(s) \\
&& = \frac{1}{2} ({\bf W}_{\bf v}(t^{n+1}) - {\bf W}_{\bf v}(t^n))  \bigg(  B({\bf v}_i(t^{n+1}))
\\
&& +  \exp(\hat{A}({\bf v}_{i}(t^{n+1})) \; \Delta t ) \;  B({\bf v}_i(t^n))  \bigg) , \nonumber
\end{eqnarray}
$\Delta t= t^{n+1} - t^n$ and \\
 $({\bf W}(t^{n+1}) - {\bf W}(t^{n})) = ( rand_1 \sqrt{\Delta t},  rand_2 \sqrt{\Delta t},  rand_3 \sqrt{\Delta t} )^t$ , \\
2.) Simpson-rule 
\begin{eqnarray}
\label{iter_1_1}
&& \int_{t^n}^{t^{n+1}}  \exp(\hat{A}({\bf v}_{i}(t^{n+1})) \;( t^{n+1} - s )) \;  B({\bf v}_i(s))  d{\bf W}_{\bf v}(s) \\
&& = \frac{1}{6} ({\bf W}_{\bf v}(t^{n+1}) - {\bf W}_{\bf v}(t^n))  \bigg(  B({\bf v}_i(t^{n+1}))
\\
&& + 4 \exp(\hat{A}({\bf v}_{i}(t^{n} + \Delta t/2 )) \; \Delta t/2 ) \;  B({\bf v}_i(t^n + \Delta t/2)) \nonumber \\
&& + +  \exp(\hat{A}({\bf v}_{i}(t^{n+1})) \; \Delta t ) \;  B({\bf v}_i(t^n))  \bigg) , \nonumber
\end{eqnarray}
$\Delta t= t^{n+1} - t^n$ and \\
 $({\bf W}(t^{n+1}) - {\bf W}(t^{n})) = ( rand_1 \sqrt{\Delta t},  rand_2 \sqrt{\Delta t},  rand_3 \sqrt{\Delta t} )^t$ , \\

\item Fixpoint iterative version with Taylor expansion of the nonlinear part is applied as:
\begin{eqnarray}
d{\bf v}_{i+1}(t) && = {\bf \tilde{a}}({\bf v}(t^n)) dt + A({\bf v}(t^n)) {\bf v}_{i+1} dt \nonumber \\
&& + B({\bf v}_i)  d{\bf W}(t) ,
\end{eqnarray}
where we have ${\bf v}_i = (v_i, \mu_i, \phi_i)^t$ as the solution vector 
in the $i$-th version, ${\bf \tilde{a}}$ is the vector and $A(t^n)$ is
the Jacobian matrix coming from the linearization,
and $d{\bf W}(t) = (dW_v(t), d W_{\mu}(t), d W_{\phi}(t))^t$ is a 
$3$-dimensional Wiener-process.
We apply the linearization of the convective part, where the matrices are given as:
\begin{eqnarray}
 {\bf a}({\bf v}) & = & {\bf a}({\bf v}(t^n)) + J({\bf v})|_{t^n} ({\bf v} - {\bf v}(t^n)) , \\
& = & \bigg( {\bf a}({\bf v}(t^n)) - J({\bf v})|_{t^n} {\bf v}(t^n) \bigg)  + J({\bf v})|_{t^n} {\bf v} , \\
& = & \tilde{{\bf a}}({\bf v}(t^n))  + J({\bf v})|_{t^n} {\bf v} .
\end{eqnarray}
The Jacobian matrix is given as:
\begin{eqnarray}
\label{num_7}
 J({\bf v})=\begin{bmatrix} \dfrac{\partial a_1}{\partial v} & \dfrac{\partial a_1}{\partial \mu} & \dfrac{\partial a_1}{\partial \phi}  \\ 
\\
\dfrac{\partial a_2}{\partial v} & \dfrac{\partial a_2}{\partial \mu} & \dfrac{\partial a_2}{\partial \phi} \\ 
\\
\dfrac{\partial a_3}{\partial v} &  \dfrac{\partial a_3}{\partial \mu} & \dfrac{\partial a_3}{\partial \phi} \end{bmatrix} =
\begin{bmatrix} \dfrac{\partial F_v(v)}{\partial v} & 0 & 0 \\
\\ 
- 2 \mu \dfrac{\partial D_a(v)}{\partial v} & - 2 D_a(v) & 0 \\ 
\\
0 & 0 & 0 \end{bmatrix} , \\
 J({\bf v})|_{t^n}  =
\begin{bmatrix} \dfrac{\partial F_v(v)}{\partial v}|_{t^n} & 0 & 0 \\
\\ 
- 2 \mu \dfrac{\partial D_a(v)}{\partial v}|_{t^n}  & - 2 D_a(v)|_{t^n} & 0 \\ 
\\
0 & 0 & 0 \end{bmatrix} , 
\end{eqnarray}
\begin{eqnarray}
A({\bf v}(t^n))= J({\bf v})|_{t^n} .
\end{eqnarray}
The fixpoint scheme is given as:
\begin{eqnarray}
\label{iter_1_1}
&& {\bf v}_{i+1}(t^{n+1}) = \exp(A({\bf v}(t^n)) \Delta t) \; \Bigg( {\bf v}(t^n) \nonumber \\
&& + A({\bf v}(t^n))^{-1}  (I - \exp(A({\bf v}(t^n)) \Delta t ))  \; {\bf \tilde{a}}(t^n) \Bigg) \nonumber \\
&& + \int_{t^n}^{\Delta t^{n+1}}  \exp(A({\bf v}(t^n)) (t^{n+1} - s) )  B({\bf v}_i)(s)  d{\bf W}_{\bf v}(s) \Bigg) , 
\end{eqnarray}
We rewrite this with the singular term $A^{-1}$ and obtain:
\begin{eqnarray}
&& {\bf v}_{i+1}(t^{n+1}) = \exp(A({\bf v}(t^n)) \Delta t) \; \Bigg( {\bf v}(t^n) \nonumber \\
&& +  \left(I \Delta t + A({\bf v}(t^n)) \frac{\Delta t^2}{2} + A^2({\bf v}(t^n)) \frac{\Delta t^3}{3!}  \right)  \; {\bf \tilde{a}}(t^n) \Bigg) \nonumber \\
&& + \int_{t^n}^{\Delta t^{n+1}}  \exp(A({\bf v}(t^n)) (t^{n+1} - s) )  B({\bf v}_i)(s)  d{\bf W}_{\bf v}(s) \Bigg) , 
\end{eqnarray}
where $ {\bf \tilde{a}}({\bf v}(t^n)) = \bigg( {\bf a}({\bf v}(t^n)) - A({\bf v}(t^n)) {\bf v}(t^n) \bigg)$
\end{enumerate}

The stochastic integral is computed as a Stratonovich integral,
e.g., Trapezoidal rule:
\begin{eqnarray}
\label{int_1_1}
 && {\bf c}(\Delta t) = \int_{t^n}^{t^{n+1}}  \exp(A({\bf v}(t^{n})) (t^{n+1} - s))  B({\bf v}_i)(s)  dW_s \\
&& = \frac{1}{2} ({\bf W}_{\bf v}(t^{n+1}) - {\bf W}_{\bf v}(t^n))  \bigg(  B({\bf v}_i(t^{n+1}))
\\
&& +  \exp(A({\bf v}(t^{n})) \; \Delta t ) \;  B({\bf v}_i(t^n))  \bigg) , \nonumber \\
&& \Delta t = t^{n+1} - t^n , \\
&& ({\bf W}(t_{j+1}) - {\bf W}(t_j)) = ( rand_1 \sqrt{\Delta t},  rand_2 \sqrt{\Delta t},  rand_3 \sqrt{\Delta t} )^t ,
\end{eqnarray}
where $rand_1$, $rand_2$ and $rand_3$ are three independent random numbers
given with $N(0,1)$.
\end{itemize}

We apply the following errors:
\begin{itemize}
\item The errors are computed as:
\begin{eqnarray}
 err_{v, \Delta t, t=1} = || v_{\Delta t, Scheme}(t=1) - v_{\Delta t_{fine}, Mil}(t=1) || ,
\end{eqnarray}
where $|| \ldots ||$ is the $L_2$-norm, $v_{\Delta t, Scheme}(t=1)$ is the solution
of the applied schemes, \\
which means \\
$Scheme = \{ EM, Mil, Iter1, Iter 2\}$. $\Delta t = \{10^{-4}, 10^{-3}, 10^{-2}, 10^{-1}\}$ are the different time-steps and $t = 1.0$ is the evaluated end-time-point.
$v_{\Delta t_{fine}, Mil}(t=1)$ is a reference solution based on the Milstein-scheme at $t=1.0$ and time-steps $10^{-5}$. \\
The same errors are encountered with the solutions of $\mu$ and $\phi$ \\
(see $err_{\mu, \Delta t, t=1},  err_{\phi, \Delta t, t=1}$).
\item The statistical errors are given as:
\begin{itemize}
\item Strong convergence is based on the errors:
\begin{eqnarray}
err_{v, \Delta t, t=1}, err_{\mu, \Delta t, t=1},  err_{\phi, \Delta t, t=1} .
\end{eqnarray}
\item Weak convergence is based on the mean values of the errors:
\begin{eqnarray}
&& err_{v, \Delta t, t=1, weak} = \frac{1}{N} \sum_{i=1}^N err_{i, v, \Delta t, t=1} , 
\end{eqnarray}
where $err_{i, v, \Delta t, t=1}$ are $i = 1, \ldots, N$ independent errors of the
solution $v$.
\item The derivation of the mean value or variance is given as:
\begin{eqnarray}
 \sigma^2_{v, \Delta t, t=1} = \frac{1}{N-1} \sum_{i=1}^N (err_{i, v, \Delta t, t=1} -  err_{v, \Delta t, t=1, weak})^2 .
\end{eqnarray}

\item Time-averaged mean-square value over the time (scan over the time-space):
\begin{eqnarray}
 \sigma^2_{v, \Delta t} = \frac{1}{T} \sum_{i=1}^N \Delta t \; ( v_{\Delta t, Scheme}(i \; \Delta t) - v_{\Delta t_{fine}, Mil}(i \; \Delta t) )^2 .
\end{eqnarray}
where the time-space is given as $i=1, \ldots, N$, $\Delta t \; N = T = 1$.

\end{itemize}
The same errors and variances are also encountered with the solutions 
of $\mu$ and $\phi$.
\end{itemize}

The solutions of the equations are given for the
different schemes in Figure \ref{coulomb}.
\begin{figure}[ht]
\begin{center}  
\includegraphics[width=5.0cm,angle=-0]{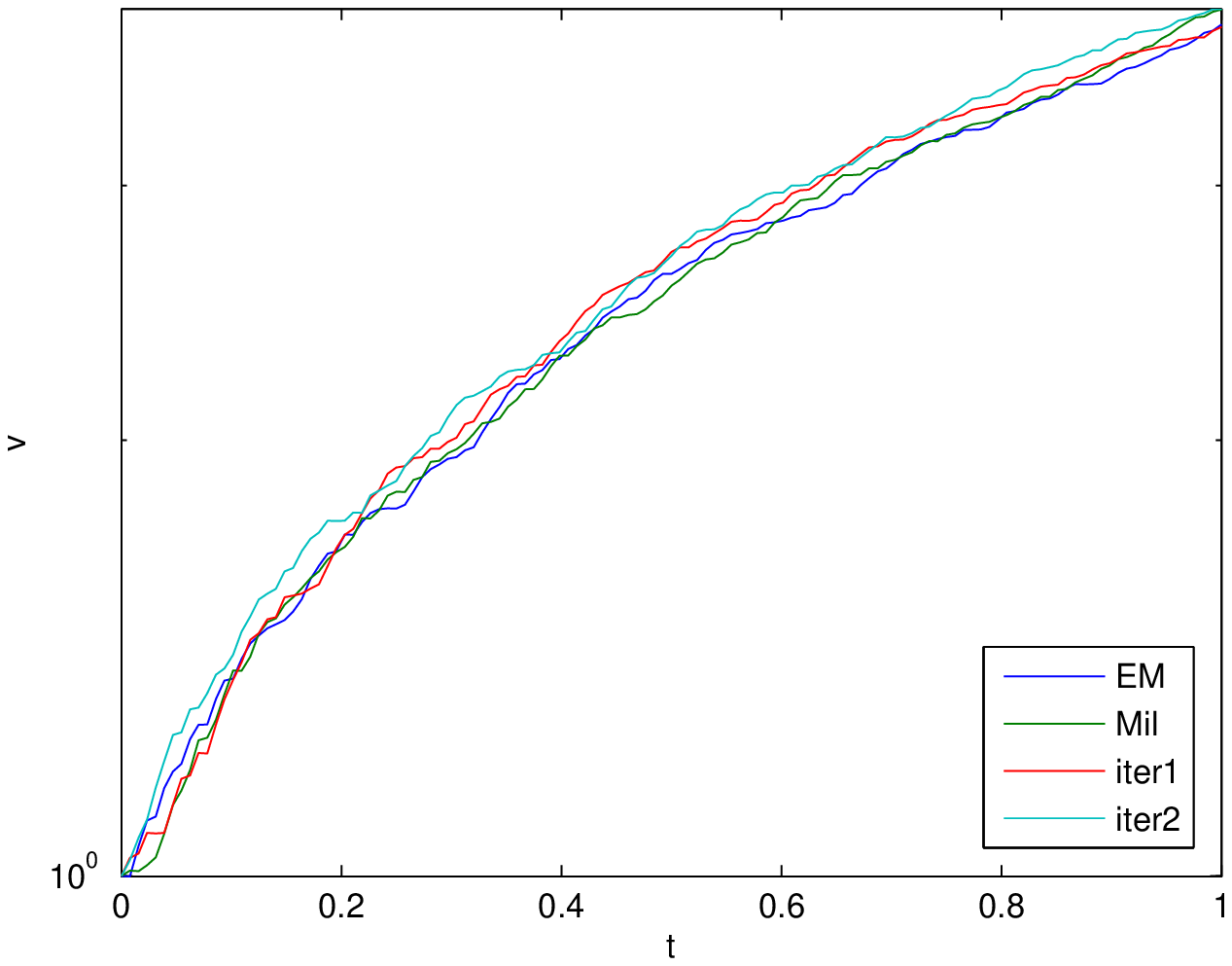} 
\includegraphics[width=5.0cm,angle=-0]{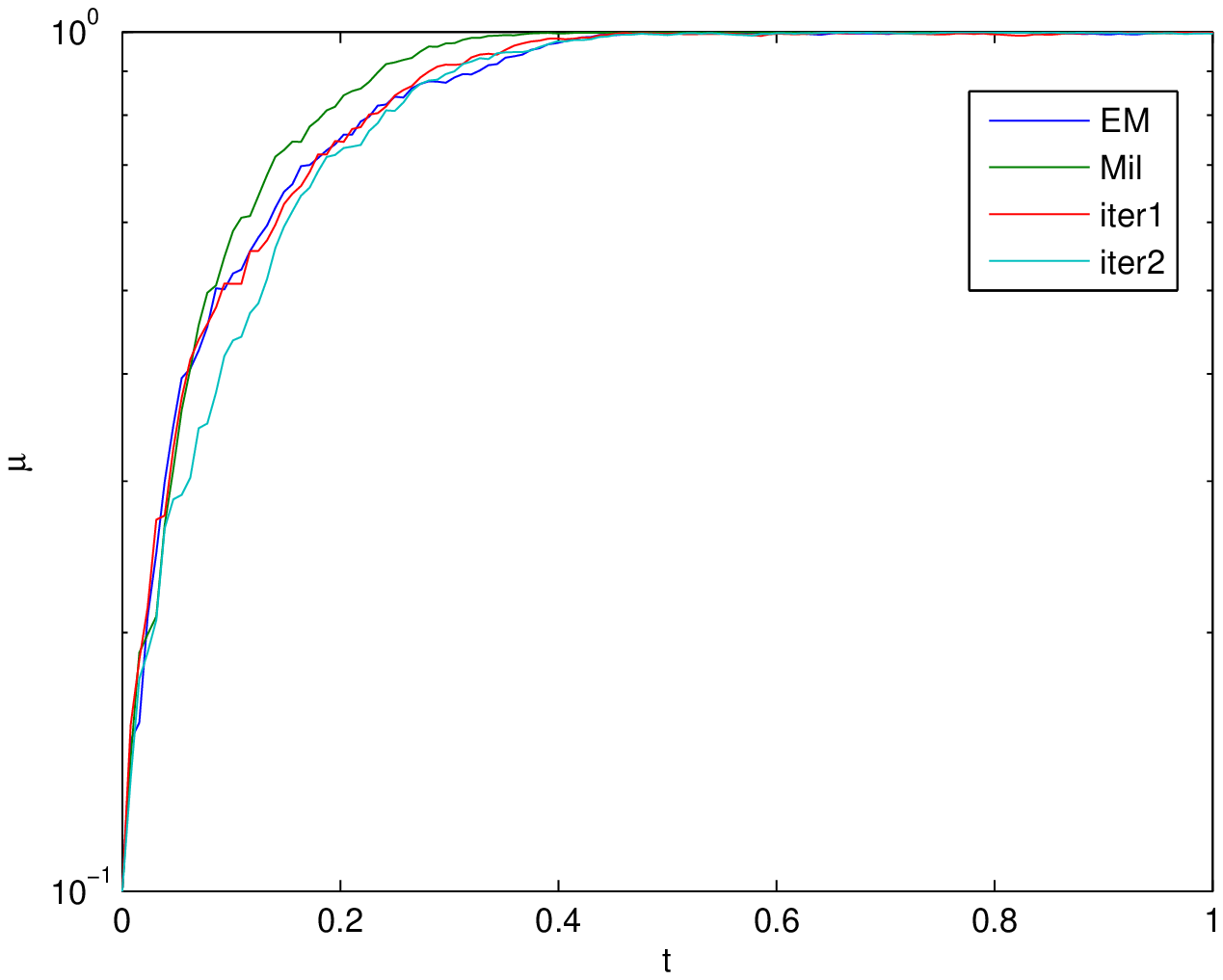}
\includegraphics[width=5.0cm,angle=-0]{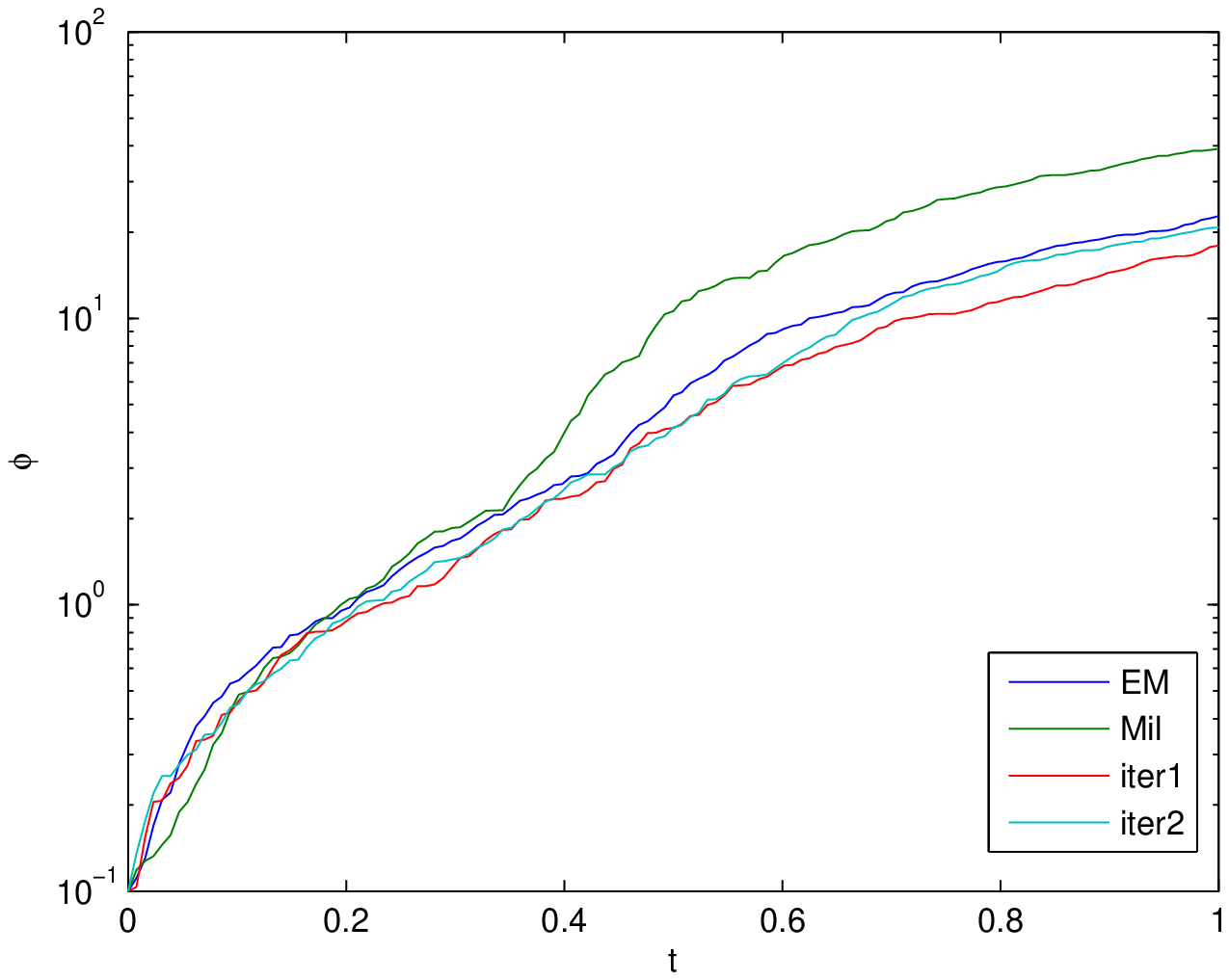} 
\end{center}
\caption{\label{coulomb} The figures present the results of the
different splitting schemes (EM: Euler-Maruyama, Iter1: Splitting Version 1, Iter2: Splitting Version 2. The upper left figure presents the solutions of $v$,
the upper right figure presents the solutions of $\mu$ and the lower figure presents the solution of $\phi$.}
\end{figure}

The convergence results of the different schemes and the three dimensional plots are given in Figure \ref{coulomb_strong}.
\begin{figure}[ht]
\begin{center}  
\includegraphics[width=5.0cm,angle=-0]{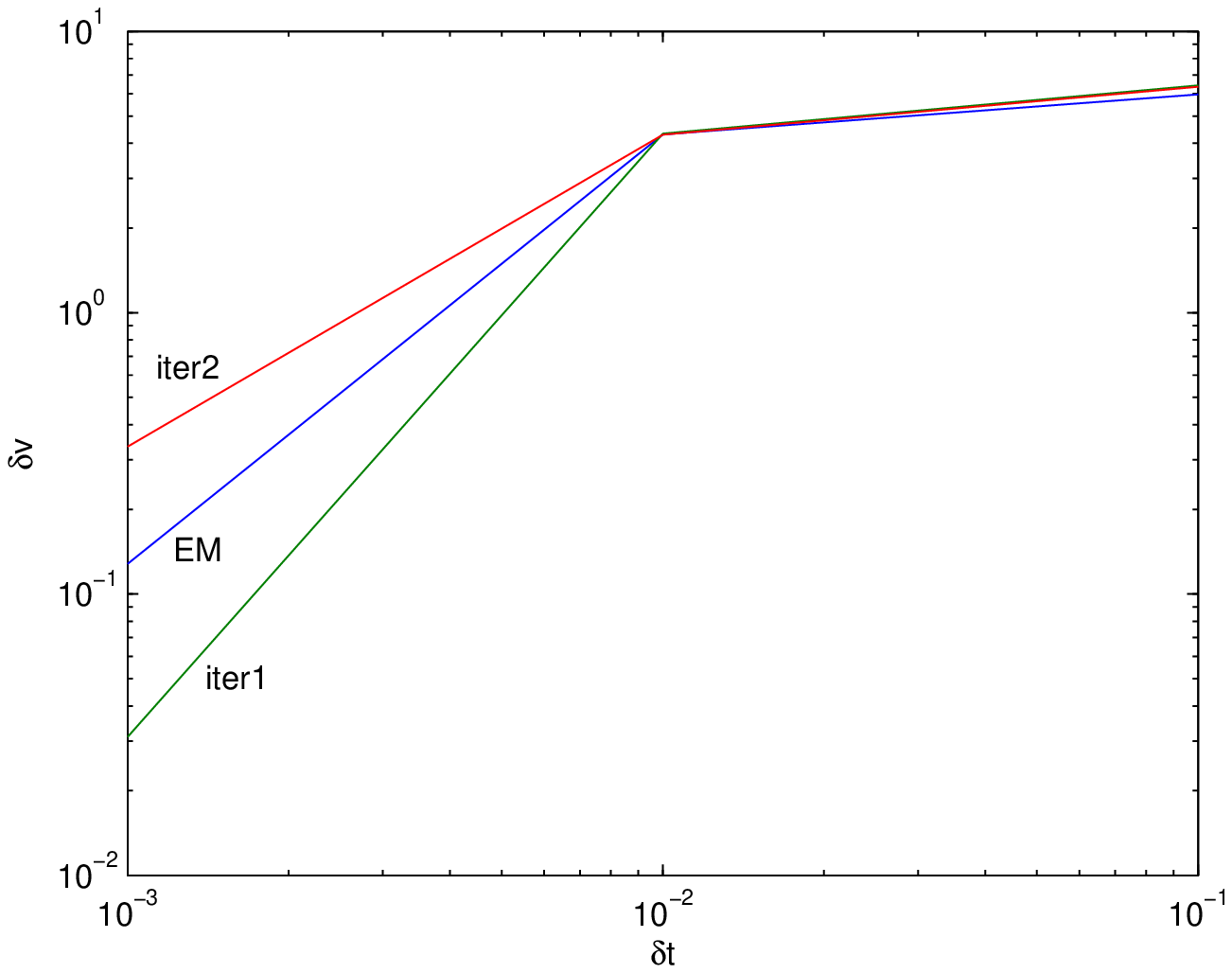} 
\includegraphics[width=5.0cm,angle=-0]{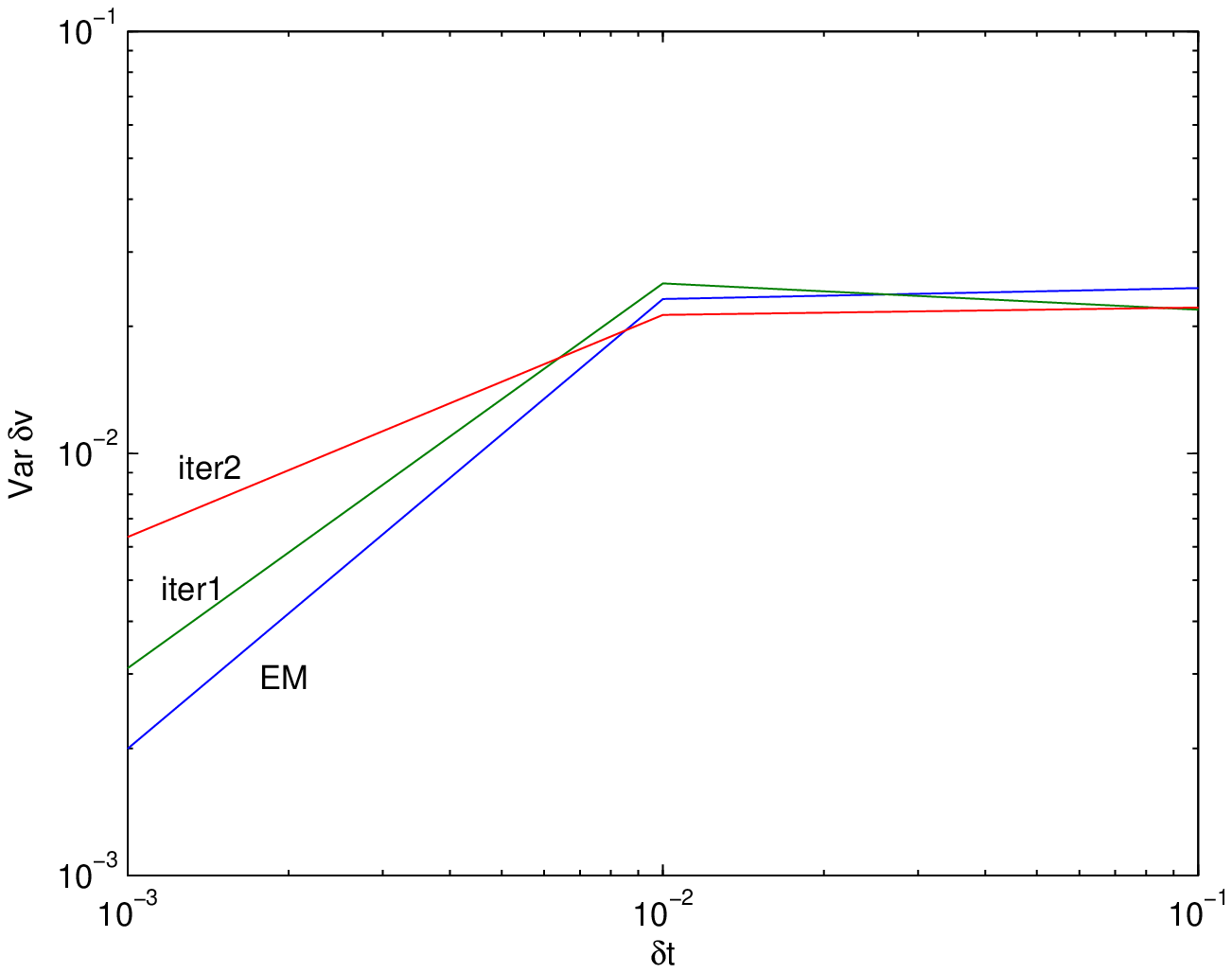} 
\includegraphics[width=5.0cm,angle=-0]{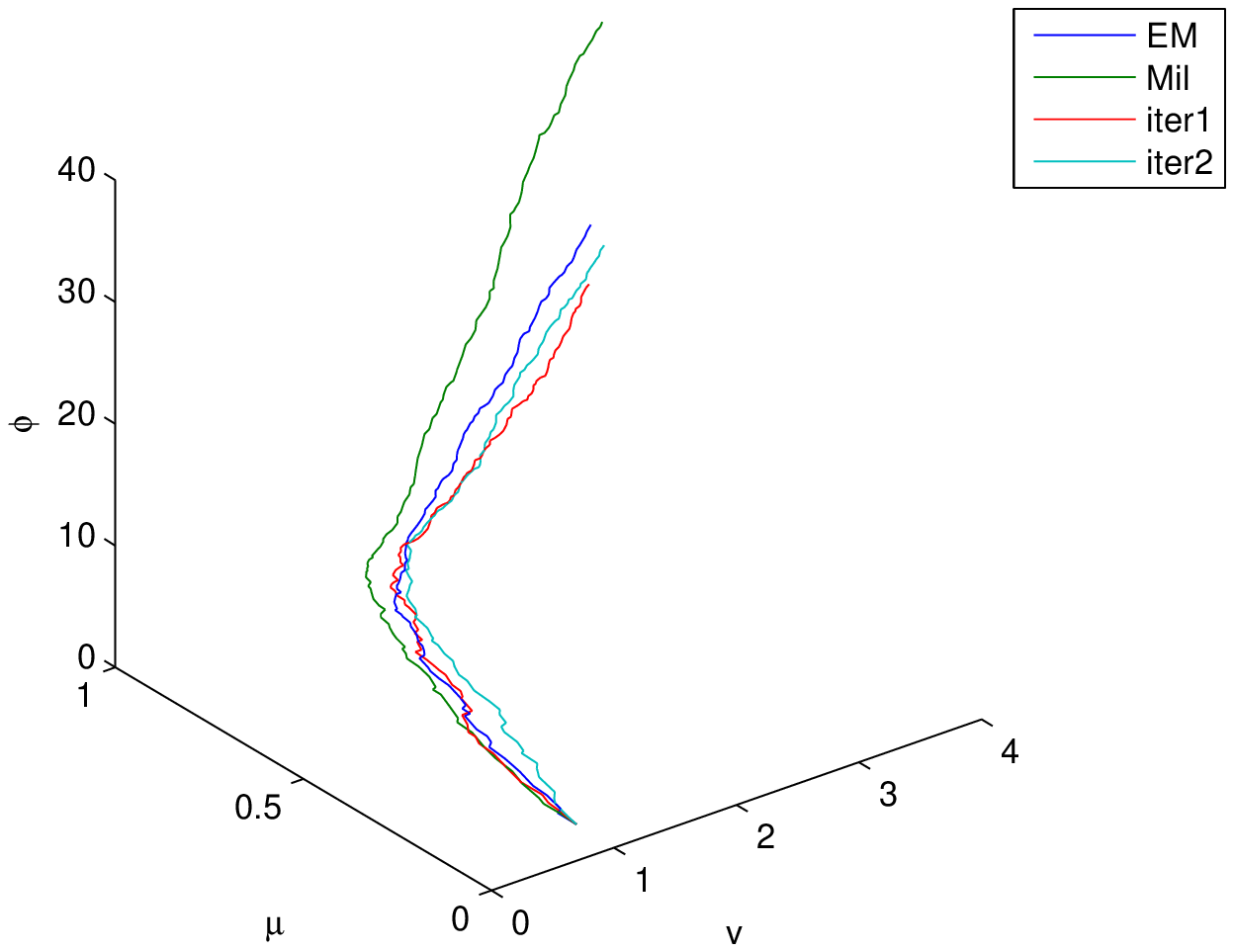} 
\end{center}
\caption{\label{coulomb_strong}  The figures present the results of the
different splitting schemes (EM: Euler-Maruyama, Iter1: Splitting Version 1, Iter2: Splitting Version 2. 
The upper left figure presents the weak convergence of $v$; the upper right figure presents variance of of $v$ and the lower figure presents the three dimensional plot of all the solutions.
}
\end{figure}

%

%

In the following, the computational time of the different schemes
are given (see Table \ref{table_1}). We obtain, that the explicit schemes,
i.e., Euler-Maruyama and Milstein scheme, are faster but they have only their 
restrictions to small time-steps. Therefore, the benefit of the implicit-iterative schemes, i.e., iterative splitting (iter1 and iter2),
is given based on large time-steps, e.g., $\Delta t \le 10^{-1}$, where the
explicit scheme are oscillating. 

\begin{table}[h]
\begin{center}
\begin{tabular}{||c||c | c| c| c||}
\hline
Method  &        \multicolumn{4}{c||}{$\Delta t$ } \\	
    & $10^{-1}$ & $10^{-2}$ & $10^{-3}$ & $10^{-4}$ \\
\hline
Euler-Maruyama & 7.7248e-04s &	0.0018s & 0.0132s & 0.1517s \\
Milstein &	0.0012s	&	0.0032s	& 0.0286s &0.3215s \\
iter1	 &	0.0080s	&	0.0536s	& 0.5302s & 6.1977s \\
iter2	 &	0.0078s	&	0.0472s	& 0.4497s & 5.1896s \\
\hline
\end{tabular}
\caption{\label{table_1} Computational time of the different solver methods.}
\end{center}
\end{table}

\begin{remark}
The examples show the important selections of the linearization method, which is related to the iterative schemes. Some small benefits are obtained with the
Version 1, see Equation (\ref{iter_1_1}), that applied a simple relaxation of the nonlinear part. Here, we take into account the relaxation effect of the 
iterative schemes as a function of time step. 
We see an improvement with larger time-steps, e.g. see the variance-errors in Figure \ref{coulomb_strong}.
On the other hand, we have taken into account the costs of the new algorithms,
that are acceptable, e.g., 2-3 times that of the standard schemes.
\end{remark}

\section{Conclusion}
\label{conclu}

We discuss the problems of using novel iterative splitting schemes
to solve stochastic differential equations, which are applied to 
Langevin equations.
We derive convergence results to the iterative schemes
and see the benefit of higher order reconstruction based on the number
of iterative steps.
The numerical examples present the advantages of the
iterative schemes and their computational costs with respect to their 
relaxation effects.
A real-life problem based on a collision model is presented.
The novel schemes can be applied to nonlinear problems
and they allow to use larger time steps without loosing their numerical
accuracy. Here, we can optimize the application of such novel schemes, 
while the computational costs for the standard schemes are higher with 
smaller time steps. In future, we see an area to optimize such novel schemes with their benefit of relaxing the nonlinear solutions and to apply larger time steps.

\bibliographystyle{plain}

\end{document}